\definecolor{greennew}{rgb}{0.1
,0.4,0.1}
 \title{\bf{Component Spectrum of Large Sparse Uniformly Random Magical Squares}}
  \author{Souvik Ray}\thanks{School of Data Science \& Society, University of North Carolina at Chapel Hill. souvikr@unc.edu}
\numberwithin{equation}{section}
\date{}
\newtheorem{theorem}{Theorem}[section]
\newtheorem{definition}[theorem]{Definition}
\newtheorem{result}[theorem]{Result}
\newtheorem{proposition}[theorem]{Proposition}
\newtheorem{corollary}[theorem]{Corollary}
\newtheorem{remark}[theorem]{Remark}
\newtheorem{lemma}[theorem]{Lemma}
\crefname{equation}{}{}
\crefname{theorem}{Theorem}{Theorems}
\crefname{assumption}{Assumption}{Assumptions}
\crefname{remark}{Remark}{Remarks}
\Crefname{lemma}{Lemma}{Lemmas}
\crefname{lemma}{Lemma}{Lemmas}
\crefname{enumi}{}{}
\begin{document}

\begin{abstract}
In this paper, we shall try to deduce asymptotic behaviour of component spectrum of random $n \times n$ magical squares with line sum $r \in \mathbb{N}$, which can also be identified as $r$-regular bipartite graphs on $2n$ vertices, chosen uniformly from the set of all possible such squares as the dimension $n$ grows large keeping $r$ fixed. We shall focus on limits (after appropriate centering and scaling) of various statistic depending upon the component structure,  e.g., number of small components, size of the smallest and largest components, total number of components etc. We shall observe that for the case $r=2$, this analysis falls into the domain of Logarithmic combinatorial structures as discussed in \cite{arratiabook}, although we shall present a new approach for this case relying only on the asymptotic results for random permutations which also helps us to demonstrate an importance sampling algorithm to estimate parameters defined in terms of uniform distribution on magical squares. The case $r \geq 3$ although does not fall in the domain of the Logarithmic combinatorial structures,   we shall establish that its component structure is rather trivial, using techniques based on a power series approach.
\end{abstract}

\keywords{Magical squares, component size, power series, logarithmic combinatorial structures.}

\subjclass[2000]{Primary: 60C05, 05A05; secondary 05A15.}

\maketitle

\section{Introduction}
The main focus of our study in this project is a special type of integer matrix, popularly known as \textit{magical squares}. A magical square of dimension $n \in \mathbb{N}$ and line sum $r \in \mathbb{N} \cup \left\{0\right\}$ is an $n \times n$ matrix with non-negative integer entries such that each of its row and column sums is equal to $r$. We shall denote by $\mathcal{M}(n,r)$ the collection of all such matrices, i.e.
\begin{equation}{\label{def}}
\mathcal{M}(n,r) := \left\{A = (a_{ij})_{i,j \in [n]} \bigg \rvert a_{ij} \in \mathbb{N} \cup \left\{0\right\}, \; \sum_{l \in [n]}a_{il} = \sum_{k \in [n]} a_{kj} =r, \; \forall \; i,j \in [n] \right\}, 
\end{equation}
where $[n]$ will denote the set $\left\{1, \ldots,n\right\}$ henceforth. For example $\mathcal{M}(n,0)$ consists of only the the $n \times n$ zero matrix whereas $\mathcal{M}(n,1)$ consists of $n!$ many $n \times n$ permutation matrices.  These kind of matrices were first studied by \cite{mcmohan}. Since then a vast literature has emerged trying to count the total number of such magical squares with dimension $n$ and line sum $r$, i.e. cardinality of the set $\mathcal{M}(n,r)$, which we denote by $H(n,r)$. A careful review of the history of the development in computing $H(n,r)$ can be found in \cite{stanley1973}, \cite[Chapter 1]{stan2} and \cite[Chapter 4]{stanenu}. Although the exact computation of $H(n,r)$ still remains out of reach except for some small values of $n$ and $r$, several asymptotic approximations of $H(n,r)$ have been proved over the course of time, albeit assuming some growth conditions on $n$ and $r$.  Three published papers in early seventies \cite{everett, bekessy1,bender} provided the asymptotic behaviour of $H(n,r)$ as $n \to \infty$ keeping $r$ bounded. Some recent advances in this directions appeared in \cite{canfield1} and \cite{canfield2}, where the authors derived asymptotic formulas for $H(n,r)$ for dense (i.e. the case when $r/n$ is bounded away from $0$) and sparse (for the case where $r=o(\sqrt{n})$) magical squares respectively. 

Elements of the set $\mathcal{M}(n,r)$ can also be identified as labelled balanced regular bipartite graphs of degree $r$ in the following way : for $A =(a_{ij})_{i,j \in [n]} \in \mathcal{M}(n,r)$, we identify this matrix with the bipartite graph $(V,E)$ with $2n$ vertices and having vertex bipartition $V = R \cup C$, where $R=\left\{R_1, \ldots, R_n\right\}, C=\left\{C_1,\ldots,C_n\right\}$ and the edge set $E$ contains $a_{ij} \in [0,r]$ many edges connecting vertices $R_i$ and $C_j$ for any $i,j \in [n]$. For example, the elements of $\mathcal{M}(n,1)$ corresponds to collections of all perfect bipartite matching with $2n$ vertices. This correspondence and the consequences of it will be crucial in our analysis helping to visualise certain constructions easily.

Bipartite graphs are one of the most popular object of study in graph theory, computer science and coding theory and have found applications in biology and medicine, for example in representation of enzyme-reaction links in metabolic pathways. On the other hand, magical squares arise frequently in different fields of mathematics and statistics. \textit{Contingency tables} or \textit{frequency tables} are one of the most used data structure in statistics for representing the joint empirical distribution of multivariate data and are useful for testing properties such as independence between row and columns. Inference using contingency tables requires analysis of the conditional distribution of a matrix with specified row and column sums; magical squares being one particular kind of such matrices. Finding an efficient algorithm to generate an uniformly distributed element from the set of all contingency tables with specified row and column sums and the deeply connected problem of calculating the total number of elements in the afore-mentioned set has attracted a lot of focus in statistical literature. We refer the reader to the work of \cite{gail,persi,greselin} for an account of the classical approaches to these problems. With all these applications in mind, it make sense to ask the question \textit{how does a typical large sparse magical square/regular bipartite graph look like? } Here sparse refer to the fact that we shall keep $r$ fixed and grow $n$ to infinity and thus most of the entries of the matrix will be zeroes.  Answering this question for a restricted case from a probabilistic point of view will be the main focus of this project.

To address the above mentioned question, we shall concentrate on the component structure of the magical squares. For any $A \in \mathcal{M}(n,r)$ and $k \in [n]$, we call a pair $(S,T)$ of non-empty subsets of $[n]$ a \textit{$k$-component} of $A$ if the following two conditions are satisfied. 

\begin{enumerate}
\item $\operatorname{card}(S)=\operatorname{card}(T)=k$.
\item Let $A(S,T)$ be the $k \times k$ submatrix of $A$ whose rows are indexed by $S$ and the columns are indexed by $T$, i.e. $A(S,T)=(a_{ij})_{i \in S, j \in T}$. Then every row and column of $A(S,T)$ sums to $r$, i.e., $A(S,T) \in \mathcal{k,r}.$ 
\end{enumerate}

We call $(S,T)$ a \textit{component} of $A$ if it is a \textit{$k$-component} for some $k$. A component $(S,T)$ is called \textit{irreducible} if any component $(S^{\prime},T^{\prime})$ with $S^{\prime} \subseteq S$ and $T^{\prime} \subseteq T$ satisfies $S^{\prime}=S$ and $T^{\prime}=T$. The matrix $A(S,T)$ is then also called \textit{irreducible}.  For example, $(\left\{i\right\}, \left\{j\right\})$ is a $1$-component (in which case it is also irreducible) if and only if $a_{ij}=r$. It is easy to observe that if $G_A$ is the regular bipartite graph corresponding to the matrix $A$, then the irreducible components of $A$ corresponds to connected components of the graph $G_A$, i.e., if $(S,T)$ is an irreducible component of $A$ then the vertex set $\left\{R_i : i \in S\right\} \cup \left\{C_j : j \in T\right\}$ forms a connected component of $G_A$. Thus an irreducible $A \in \mathcal{M}(n,r)$ corresponds to a connected regular bipartite graph $G_A$ of degree $r$ with $2n$ vertices. Henceforth, $\chi_k(A)$ will denote the number of $k$-components of magical square $A$ or equivalently the number of connected components of size $2k$ for the bipartite graph $G_A$. 

Fix $r \in \mathbb{N} \cup \left\{0\right\}$.  Consider an uniform magical square $\mathcal{A}_n \sim \text{Uniform}(\mathcal{M}(n,r))$. We want to address the following two questions in this project.
\begin{enumerate}[label=(\alph*)]
\item How does the joint distribution of $(\chi_1(\mathcal{A}_n), \chi_2(\mathcal{A}_n), \ldots)$ behave as $n \to \infty$?
\item What is the asymptotic distribution (if any after proper centering and normalising) of the size of the smallest and largest component of $\mathcal{A}_n$, i.e., $S_n := \inf \left\{ k \geq 1 : \chi_k(\mathcal{A}_n) >0 \right\}$ and $L_n := \sup \left\{ k \geq 1 : \chi_k(\mathcal{A}_n) >0 \right\}$ respectively?
\item What is the asymptotic behaviour of total number of irreducible components of $\mathcal{A}_n$, i.e., $C_n := \sum_{i=1}^n \chi_i(\mathcal{A}_n)$ ?
\end{enumerate} 
For the case $r=1$, there are $n$ many $1$-component for any member of $\mathcal{M}(n,1)$. This being a trivial case, we shall only focus on the situation when $r \geq 2$. It will turn out that the case $r \geq 3$ is also trivial in the sense that most of the matrices in $\mathcal{M}(n,r)$ are irreducible, see \Cref{rgeq3}. Only the case $r=2$ will give us some non-trivial results.

\section{Component Spectrum Asymptotics Using Formal Power Series}
\label{power}

In this section, we shall derive asymptotic properties of the small components counts, the size of the smallest component and total number of components for uniformly random magical squares with line sum$r$ and dimension $n$. Our approach will be to exploit a formal power series identity, see \Cref{cycleindexm}, relating the the number of magical squares in $\mathcal{M}(n,r)$ and having a pre-specified component spectrum with number of irreducible magical squares in $\mathcal{M}(n,r)$. We emphasize on the fact that the power series' are formal in many situations since they have radii of convergence $0$ which will be the situation for $r \geq 3$ especially; the identities are only valid in the sense of equality of co-efficients. Nevertheless, we can still differentiate those formal power series' by defining the derivative of the formal power series $\sum_{n \geq 0} p_nx^n$ to be $\sum_{n \geq 1} np_nx^{n-1}$.   These operations will be routinely carried out in this section. We introduce the notation $\textsc{Coeff}(\sum_{n \geq 0} p_n x^n ; m)$ to denote the $m$-th co-efficient of the power series, i.e., $p_m$.

We start this section by recalling the classical \textit{Cycle index theorem} for symmetric groups. This will motivate our approach presented in this section and later will be instrumental in our analysis in \Cref{symm}. Let $\mathscr{S}_n$ be the symmetric group of order $n$ containing all the permutations of the set $[n]$ and for any $\pi \in \mathscr{S}_n$, we shall denote by $c_i(\pi)$ the number of cycles of length $i \in [n]$ present in the permutation $\pi$. The cycle index theorem states that for any $x, t_1, t_2, \ldots,$ we have the equality of the following two formal power series.
\begin{equation}{\label{cycleindex}}
\sum_{n \geq 0} \sum_{\pi \in \mathscr{S}_n} t_1^{c_1(\pi)}\ldots t_n^{c_n(\pi)} \dfrac{x^n}{n!} = \exp \left( \sum_{n \geq 1} \dfrac{t_nx^n}{n}\right).
\end{equation}
For any $a_1, \ldots,a_n \in \mathbb{N} \cup \left\{0\right\}$, if we define $h^{(n)}(a_1,\ldots,a_n)$ as the number of permutations $\pi \in \mathscr{S}_n$ with exactly $a_i$ many $i$-cycles; then \Cref{cycleindex} can be rewritten as 
  \begin{equation}{\label{cycleindex2}}
  \sum_{n \geq 0} \sum_{a_1, \ldots, a_n \geq 0} h^{(n)}(a_1,\ldots,a_n) t_1^{a_1}\ldots t_n^{a_n} \dfrac{x^n}{n!} = \exp \left( \sum_{n \geq 1} \dfrac{t_nx^n}{n}\right).
\end{equation}
Based on this formal power series representation, one can prove the following theorem for asymptotic cycle structure of a random permutation.

\begin{theorem}{\label{cycleindex3}}
\begin{enumerate}[label=(\Alph*)]
\item Consider a geometric random variable $N_x$ with failure probability $x \in (0,1)$, i.e., $\mathbb{P}(N_x=n) = (1-x)x^n$, for all $n \geq 0$. Let $\Pi_x \rvert (N_x=n) \sim \text{Uniform}(\mathscr{S}_n).$ Then 
$$ \left(c_1(\Pi_x), c_2(\Pi_x), \ldots \right) \stackrel{d}{=} (Z_{1,x}, Z_{2,x}, \ldots),$$
where $Z_{i,x} \stackrel{ind.}{\sim} \text{Poisson}(x^i/i)$, for all $i \geq 1$.

\item Let $\Pi_n \sim \text{Uniform}(\mathscr{S}_n)$. Then 
$$ \left(c_1(\Pi_n), c_2(\Pi_n), \ldots \right) \stackrel{d}{\longrightarrow} (Z_1, Z_2, \ldots),$$
where $Z_i \stackrel{ind.}{\sim} \text{Poisson}(1/i)$, for all $i \geq 1$. 

\end{enumerate}
\end{theorem}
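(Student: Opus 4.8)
The plan is to prove both statements by computing joint probability generating functions and reading them off from \eqref{cycleindex}; no combinatorial input beyond an elementary Abelian lemma for power series is needed. For part (A), I would use that $\Pi_x\mid(N_x=n)$ is uniform on $\mathscr{S}_n$ and that, for a fixed $\pi$, all but finitely many $c_i(\pi)$ vanish, so that for $t_1,t_2,\ldots\in[0,1]$,
\[
\mathbb{E}\Big[\prod_{i\ge1}t_i^{\,c_i(\Pi_x)}\Big]
=\sum_{n\ge0}(1-x)x^n\cdot\frac{1}{n!}\sum_{\pi\in\mathscr{S}_n}\prod_{i}t_i^{\,c_i(\pi)}
=(1-x)\exp\Big(\sum_{i\ge1}\frac{t_ix^i}{i}\Big),
\]
the last equality being exactly \eqref{cycleindex}; the rearrangement of summations is legitimate because $x\in(0,1)$ and $0\le t_i\le1$ make every series absolutely convergent. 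Since $1-x=\exp\big(-\sum_{i\ge1}x^i/i\big)$, this equals $\prod_{i\ge1}\exp\big(\tfrac{x^i}{i}(t_i-1)\big)=\prod_{i\ge1}\mathbb{E}\big[t_i^{Z_{i,x}}\big]$, the joint generating function of an independent family $Z_{i,x}\sim\text{Poisson}(x^i/i)$; since the law of a $\mathbb{Z}_{\ge0}^{\infty}$-valued vector is determined by its generating function on $[0,1]^{\infty}$, part (A) follows. (Setting all $t_i=1$ returns $1$, which is the almost sure identity $\sum_i i\,c_i(\Pi_x)=N_x$ in disguise.)

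For part (B), fix $m\ge1$ and $s_1,\ldots,s_m\in[0,1]$, and specialise \eqref{cycleindex} at $t_i=s_i$ for $i\le m$ and $t_i=1$ for $i>m$: the left-hand side becomes $\sum_{n\ge0}x^n\,\mathbb{E}\big[\prod_{i\le m}s_i^{\,c_i(\Pi_n)}\big]$ and the right-hand side factors as $g(x)/(1-x)$, where $g(x):=\exp\big(\sum_{i\le m}\tfrac{(s_i-1)x^i}{i}\big)$. Hence
\[
\mathbb{E}\Big[\prod_{i\le m}s_i^{\,c_i(\Pi_n)}\Big]=\textsc{Coeff}\Big(\frac{g(x)}{1-x}\,;\,n\Big)=\sum_{k=0}^{n}\textsc{Coeff}(g(x);k).
\]
Since $g$ is the exponential of a polynomial it is entire, so $\sum_{k\ge0}\textsc{Coeff}(g(x);k)$ converges absolutely to $g(1)$, and therefore the partial sums converge: $\mathbb{E}\big[\prod_{i\le m}s_i^{\,c_i(\Pi_n)}\big]\to g(1)=\exp\big(\sum_{i\le m}\tfrac{s_i-1}{i}\big)=\prod_{i\le m}\mathbb{E}\big[s_i^{Z_i}\big]$ with $Z_i\sim\text{Poisson}(1/i)$ independent. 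Pointwise convergence of the joint generating functions on $[0,1]^m$ to the generating function of $(Z_1,\ldots,Z_m)$ yields $(c_1(\Pi_n),\ldots,c_m(\Pi_n))\stackrel{d}{\longrightarrow}(Z_1,\ldots,Z_m)$, and since $m$ is arbitrary this is precisely the asserted convergence of $(c_1(\Pi_n),c_2(\Pi_n),\ldots)$ in the product topology.

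I do not expect a genuine obstacle: both parts are generating-function bookkeeping, and the sole analytic ingredient is the Abelian fact that the partial sums $\sum_{k\le n}\textsc{Coeff}(g(x);k)$ tend to $g(1)$ when $g$ is analytic on a disc of radius strictly exceeding $1$. The only points deserving care are that the manipulations of \eqref{cycleindex} be justified as honest analytic identities on $\{|x|<1\}$ rather than merely formal ones --- automatic here, by absolute convergence --- and that the family of test points employed ($t_i\in[0,1]$, all but finitely many equal to $1$) genuinely pins down the relevant laws. An alternative derivation of (B) runs through the conditioning relation $(c_i(\Pi_n))_{i\ge1}\stackrel{d}{=}(Z_{i,x})_{i\ge1}\mid\{\textstyle\sum_i iZ_{i,x}=n\}$, valid for every $x\in(0,1)$ because $\sum_i i\,c_i(\Pi_x)=N_x$ almost surely; this is the viewpoint of the logarithmic-combinatorial-structures theory of \cite{arratiabook}, but it requires a local-limit estimate for $\sum_i iZ_{i,x}$ and is thus heavier than the direct coefficient computation for the present statement.
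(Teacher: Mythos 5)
Your proof is correct, and it is worth noting that the paper itself does not prove \Cref{cycleindex3} --- it only states that part (B) "is proved using its first assertion and application of a proper Tauberian theorem" and then defers to \cite{arratia}. So let me compare your argument with the route the paper alludes to. Your proof of (A) is the standard probability-generating-function computation from \eqref{cycleindex} and is essentially the same as the one implicit in the literature. Your proof of (B), however, is a genuinely different and simpler route. Rather than feeding the Abel average from (A) into a Tauberian theorem (the paper's alluded-to path, and also the path one takes through the conditioning relation plus a local limit theorem as in \cite{arratiabook}), you observe that the generating function factors as $g(x)/(1-x)$ with $g$ entire, so the $n$-th coefficient is the partial sum $\sum_{k\le n}\textsc{Coeff}(g;k)$, which converges to $g(1)$ by absolute convergence --- an Abelian rather than Tauberian statement, and one that requires no regularity hypotheses on the coefficient sequence. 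This buys you a shorter, self-contained proof with no appeal to a Tauberian theorem or a Feller-type coupling, at the modest cost of being specific to the permutation situation (the entire-function structure of $g$ is what makes it work); a Tauberian-theorem approach is the one that actually generalises to the magical-square setting later in the paper, where the corresponding prefactor is not entire. Your argument is sound: the interchange of sums in (A) is justified by non-negativity, the identity \eqref{cycleindex} is analytic (not merely formal) for $|x|<1$ because the exponential on the right converges there, and pointwise convergence of joint generating functions on $(0,1]^m$ for $\mathbb{Z}_{\geq 0}^m$-valued vectors does imply weak convergence of finite-dimensional marginals, which together with arbitrariness of $m$ gives convergence in the product topology.
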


The second assertion in \Cref{cycleindex3} is proved using its first assertion and application of a proper Tauberian theorem. For a short proof of \Cref{cycleindex3}, a general history of this result and an estimate of total variation distance between the two processes appearing in the statement of \Cref{cycleindex3}, we refer to the work of \cite{arratia}. 

The second assertion of \Cref{cycleindex3} is a special case of a general class known as \textit{Logarithmic Combinatorial Structures}. At the heart of this concept lies a combinatorial structure that can be decomposed into component elements, like cycle decomposition for permutations or connected components for a graph. In such a decomposable structure of size $n$, we have $C_i^{(n)}$ many components of size $i \in [n]$, satisfying $\sum_{i=1}^n iC_i^{(n)}=n$. 
\begin{definition}{\label{log}}
We say that a decomposable structure satisfies the \textit{Logarithmic Condition} if there exists a sequence of independent non-negative integer valued random variables $\left\{Z_i : i \geq 1\right\}$ such that 
\begin{enumerate}
\item $\left( C_1^{(n)}, \ldots, C_n^{(n)} \right) \stackrel{d}{=} \left( Z_1, \ldots, Z_n \right) \bigg \rvert \left( \sum_{i=1}^n iZ_i=n\right),$
\item there exists $\theta \in (0, \infty)$ such that $i\mathbb{P}(Z_i=1), i \mathbb{E}Z_i \to \theta$ as $i \to \infty$. 
\end{enumerate}
\end{definition}

We refer the interested reader to the  excellent monograph~\cite{arratiabook} on Logarithmic Combinatorial Structures and the asymptotic behaviour of their component spectrum in order to get a glimpse of some of the universal properties shared by them. In particular, the number of small components, i.e.  $(C_1^{n}, \ldots, C_k^{(n)})$ for any fixed $k$, are asymptotically independent with $C_i^{(n)}$ converging weakly to the distribution of $Z_i$. A lot of well-known structures fall into the category described by the Logarithmic condition, permutations being one of them. In fact the formula \Cref{cycleindex} is derived from the following identity,
$$ \#( \pi \in \mathscr{S}_n \text{ with } c_1(\pi)=c_i \text{ for all } i=1, \ldots,n) = \dfrac{n!}{\prod_{i=1}^n (i!)^{c_i}c_i!} \prod_{i=1}^n ((i-1)!)^{c_i}  = \dfrac{n!}{\prod_{i=1}^n i^{c_i}c_i!}, \; \text{if } \sum_{i=1}^n ic_i=n.$$
This shows that permutations satisfy \Cref{log} with $Z_i \sim \text{Poisson}(1/i)$ and $\theta=1$. 

We start the analysis of $\mathcal{M}(n,r)$ by first computing the number of such matrices/graphs with pre-specified component spectrum. Denote by $f(n,r)$ the total number of irreducible magical square/ connected regular bipartite graphs in $\mathcal{M}(n,r)$. Fix $a_1, \ldots, a_n \in \mathbb{N} \cup \left\{0\right\}$ with $\sum_{i=1}^n ia_i=n$. The number of ways in which $n$ rows can be partitioned such that there are $a_i$ many subsets with $i$ many elements is $n!/(\prod_{i=1}^n (i!)^{a_i}a_i!)$. The corresponding columns then can be chosen in $n!/(\prod_{i=1}^n (i!)^{a_i})$ many ways. Once we have partitioned the rows and columns, there are $f(i,r)$ many ways to assign an irreducible matrix to a particular component of $i$ rows and $i$ columns. Hence, if  $h^{(n)}_r(a_1, \ldots,a_n)$ denotes the number of $A \in \mathcal{M}(n,r)$ having exactly $a_i$ many $i$-components, we have
\begin{equation}{\label{cycindexm}}
h^{(n)}_r(a_1, \ldots,a_n) = \dfrac{n!^2}{\prod_{i=1}^n (i!)^{2a_i}a_i!} \prod_{i=1}^n (f(i,r))^{a_i}, \; \text{provided } \sum_{i=1}^n ia_i =n.
\end{equation} 
Summing over $a_1, \ldots,a_n$ in the equation on \Cref{cycindexm}, we have the following formal power series identity, see \cite[Proposition 5.5.8]{stanenu} for discussion and proof regarding this identity.
\begin{equation}{\label{cycleindexm}}
      \sum_{n \geq 0} \sum_{a_1, \ldots, a_n \geq 0} h^{(n)}_r(a_1,\ldots,a_n) t_1^{a_1}\ldots t_n^{a_n} \dfrac{x^n}{(n!)^2} = \exp \left( \sum_{n \geq 1} \dfrac{f(n,r)t_nx^n}{(n!)^2}\right).
 \end{equation}
Plugging in $t_1=t_2=\cdots=1$ in \Cref{cycleindexm}, we get the following auxiliary identity. 
\begin{equation}{\label{cycleindexm2}}
      \sum_{n \geq 0} H(n,r) \dfrac{x^n}{(n!)^2} = \exp \left( \sum_{n \geq 1} \dfrac{f(n,r)x^n}{(n!)^2}\right).
 \end{equation}
Fortunately, it is not hard to find exact values of $f(n,2)$ for any positive integer $n$. Indeed a small argument, as presented in the proof of \cite[Proposition 5.5.10]{stanenu}, shows that $f(n,2)=n!(n-1)!/2$ for all $n \geq 2$ while $f(1,2)=1$; and hence \cref{cycleindexm} and \cref{cycleindexm2} simplify to the following identities.
\begin{equation}{\label{cycleindexm3}}
      \sum_{n \geq 0} \sum_{a_1, \ldots, a_n \geq 0} h^{(n)}_2(a_1,\ldots,a_n) t_1^{a_1}\ldots t_n^{a_n} \dfrac{x^n}{(n!)^2} = \exp \left( t_1x + \dfrac{1}{2}\sum_{n \geq 2} \dfrac{t_nx^n}{n}\right), \;\;
      \sum_{n \geq 0} H(n,2) \dfrac{x^n}{(n!)^2} = \sqrt{\dfrac{e^x}{1-x}}.
 \end{equation}
The last identity show that $\sum_{n \geq 0} H(n,2)x^n/(n!)^2$ is not only defined as a formal power series but also has positive radius of convergence, in particular it converges for $x \in (-1,1)$. Unfortunately, similar things doesn't happen for $r \geq 3$. In other words, not only there is not exact values known for $f(n,r)$ for large $n$ and $r \geq 3$, the power series  $\sum_{n \geq 0} H(n,r)x^n/(n!)^2$ has radius of convergence $0$. To understand why this is true, we shall need an asymptotic formula for $H(n,r)$. The following asymptotic formula for $H(n,r)$ is a corollary of \cite[Theorem 1.3]{canfield2}. For any $r \geq 2$,
\begin{equation}{\label{asymp}}
 H(n,r) = \dfrac{(nr)!}{(r!)^{2n}}\exp \left(\dfrac{(r-1)^2}{2} + O(1/n) \right) = \dfrac{\sqrt{2\pi} r^{nr+1/2}n^{nr+1/2}e^{-nr+(r-1)^2/2}}{(r!)^{2n}}\left( 1+ O(1/n)\right), \; \text{ as } n \to \infty.
\end{equation}
From the above expression it is clear that for any $r \geq 3$ and $c>0$, eventually  $H(n,r)/(n!)^2 > e^{cn}$ and hence the previous observation that the power series $\sum_{n \geq 0} H(n,r)x^n/(n!)^2$ has zero radius of convergence follows. On the other hand, the asymptotics in \Cref{asymp} and Stirling's approximation allows us to conclude that $H(n,2)/(n!)^2 \sim \sqrt{e}/\sqrt{\pi n}$ as $n \to \infty$, an observation that will be used later on. This change in the behaviour of the power series from the case $r=2$ to the case $r \geq 3$ already suggest significant difference in asymptotic component structure of the respective magical squares. Using \Cref{cycleindexm3}, we can write the following result which is an analogue of the first assertion in \Cref{cycleindex3} in our context for the case $r=2$.   

\begin{proposition}
Fix $x \in (0,1)$ and consider the  probability measure $\mathbb{P}_x$ defined on $\mathbb{N} \cup \left\{0\right\}$ as follows.
$$ \mathbb{P}_x(n) := (1-x)^{1/2}e^{-x/2} H(n,2)x^n/(n!)^2, \; \forall \; n \geq 0.$$
Consider the following sampling scheme. Let $N_x \sim \mathbb{P}_x$ and $\mathcal{A} \rvert (N_x=n) \sim \text{Uniform}(\mathcal{M}(n,r))$. Then $\left\{\chi_i(\mathcal{A}) : n \geq 1\right\}$ is a collection of independent Poisson random variables with $\chi_1(\mathcal{A})  \sim \text{Poisson}(x)$ and $\chi_i(\mathcal{A})  \sim \text{Poisson}(x^i/(2i))$, for all $i \geq 2$. 
\end{proposition}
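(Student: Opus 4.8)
The plan is to compute the joint probability generating function of $\left( \chi_1(\mathcal{A}), \chi_2(\mathcal{A}), \ldots \right)$ under the two-stage sampling scheme and to recognise it as that of the asserted family of independent Poisson variables; this is the exact analogue, for $\mathcal{M}(n,2)$, of the way the first assertion of \Cref{cycleindex3} is extracted from \Cref{cycleindex2}. Let $t_1, t_2, \ldots \in [0,1]$ with $t_i = 1$ for all but finitely many $i$. Conditionally on $N_x = n$ the matrix $\mathcal{A}$ is uniform on the $H(n,2)$ elements of $\mathcal{M}(n,2)$, so
\[
\mathbb{E}\left[ \prod_{i \geq 1} t_i^{\chi_i(\mathcal{A})} \,\Big|\, N_x = n \right] = \frac{1}{H(n,2)} \sum_{a_1, \ldots, a_n \geq 0} h^{(n)}_2(a_1, \ldots, a_n)\, t_1^{a_1} \cdots t_n^{a_n}
\]
(the product over $i$ being finite almost surely since $\chi_i(\mathcal{A}) = 0$ for $i > N_x$). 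Multiplying by $\mathbb{P}_x(n) = (1-x)^{1/2} e^{-x/2} H(n,2)\, x^n/(n!)^2$ cancels the factor $H(n,2)$, and summing over $n \geq 0$ — the interchange being legitimate as all terms are non-negative — yields, by the first identity in \Cref{cycleindexm3},
\[
\mathbb{E}\left[ \prod_{i \geq 1} t_i^{\chi_i(\mathcal{A})} \right] = (1-x)^{1/2} e^{-x/2} \exp\left( t_1 x + \frac{1}{2} \sum_{i \geq 2} \frac{t_i x^i}{i} \right),
\]
a convergent identity for $x \in (0,1)$ since $\sum_n H(n,2) x^n/(n!)^2$ has radius of convergence $1$.

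It remains to absorb the prefactor. From $\sum_{i \geq 1} x^i/i = -\log(1-x)$ we get $(1-x)^{1/2} e^{-x/2} = \exp\left( \tfrac12 \log(1-x) - \tfrac{x}{2} \right) = \exp\left( -x - \tfrac12 \sum_{i \geq 2} x^i/i \right)$, hence
\[
\mathbb{E}\left[ \prod_{i \geq 1} t_i^{\chi_i(\mathcal{A})} \right] = \exp\left( x(t_1 - 1) + \frac{1}{2} \sum_{i \geq 2} \frac{x^i(t_i - 1)}{i} \right) = e^{x(t_1 - 1)} \prod_{i \geq 2} \exp\left( \frac{x^i}{2i}(t_i - 1) \right).
\]
The right-hand side is exactly the joint probability generating function, evaluated at $(t_1, t_2, \ldots)$, of independent random variables $\chi_1 \sim \text{Poisson}(x)$ and $\chi_i \sim \text{Poisson}(x^i/(2i))$ for $i \geq 2$; note that $\sum_{i \geq 1}$ of these rates is finite, so the product law is a bona fide distribution supported on finitely supported sequences. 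Since the displayed identity holds for every admissible $(t_i)$ — in particular for every choice with $t_i = 1$ outside a finite set, which pins down all finite-dimensional marginals — the sequence $\left( \chi_i(\mathcal{A}) \right)_{i \geq 1}$ has precisely this law. Taking all $t_i = 1$ (equivalently, using the second identity in \Cref{cycleindexm3}) likewise confirms that $\mathbb{P}_x$ is a probability measure.

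There is no genuine obstacle here; the proof is bookkeeping on top of \Cref{cycleindexm3}. The two points needing a little care are the Tonelli-type justification for swapping the sum over $n$ with the conditional expectation (immediate from non-negativity), and the identification of $(1-x)^{1/2} e^{-x/2}$ with $\exp\left( -x - \tfrac12 \sum_{i \geq 2} x^i/i \right)$, which is precisely what makes the Poisson rates come out as $x$ when $i = 1$ and $x^i/(2i)$ when $i \geq 2$. An equally short alternative bypasses generating functions: for a finitely supported $(a_i)_{i \geq 1}$ with $n := \sum_i i a_i$, substitute the explicit count \Cref{cycindexm} together with $f(1,2) = 1$ and $f(i,2) = i!(i-1)!/2$ ($i \geq 2$) into $\mathbb{P}\left( \chi_i(\mathcal{A}) = a_i \ \forall i \right) = \mathbb{P}_x(n)\, h^{(n)}_2(a_1, \ldots, a_n)/H(n,2)$; the quantity $h^{(n)}_2(a_1, \ldots, a_n)/(n!)^2$ collapses to $\tfrac{1}{a_1!} \prod_{i \geq 2} \tfrac{(1/(2i))^{a_i}}{a_i!}$, and the outcome is recognised as the product of the corresponding Poisson point masses, again with the logarithmic series handling the normalising constant.
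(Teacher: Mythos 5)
Your proof is correct and takes essentially the same route as the paper's: condition on $N_x$, cancel $H(n,2)$, invoke the first identity of \Cref{cycleindexm3}, and absorb the prefactor $(1-x)^{1/2}e^{-x/2}$ via the logarithmic series $-\log(1-x)=\sum_{i\geq 1}x^i/i$ to recognise the joint PGF of the asserted independent Poissons. The alternative pointwise computation you sketch at the end is a harmless rephrasing of the same bookkeeping.
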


\begin{proof}
It is enough to show that 
\begin{equation} {\label{eq1}}
\mathbb{E}_x \prod_{i \geq 1} t_i^{\chi_i(\mathcal{A})} = \exp \left[ x(t_1-1) + \sum_{n \geq 2} (t_n-1)\dfrac{x^n}{2n}\right], \; \forall \; t_1,t_2, \ldots \in (0,1],
\end{equation}
where the notation $\mathbb{E}_x$ makes the dependence of the expectation on $x$ explicit. The left hand side of the above equation can be expanded into the following. 
\begin{align}
\mathbb{E}_x \prod_{i \geq 1} t_i^{\chi_i(\mathcal{A})} = \sum_{n \geq 0} \mathbb{P}_x(n) \mathbb{E}_n \prod_{i \geq 1} t_i^{\chi_i(\mathcal{A})} &= \sum_{n \geq 0} (1-x)^{1/2}e^{-x/2} \dfrac{H(n,2)x^n}{(n!)^2} \dfrac{1}{H_{n,2}} \sum_{A \in \mathcal{M}(n,2)} \prod_{i \geq 1} t_i^{\chi_i(A)} \nonumber \\
& = (1-x)^{1/2}e^{-x/2} \sum_{n \geq 0}  \dfrac{x^n}{(n!)^2}  \sum_{a_1, \ldots, a_n \geq 0} h_2^{(n)}(a_1, \ldots,a_n) \prod_{i=1}^n  t_i^{a_i} \nonumber \\
& = (1-x)^{1/2}e^{-x/2} \exp \left( t_1x + \dfrac{1}{2}\sum_{n \geq 2} \dfrac{t_nx^n}{n}\right), \label{eq2}
\end{align}
where $\mathbb{E}_n$ denotes that expectation with respect to the uniform distribution on $\mathcal{M}(n,2)$ and we have used \Cref{cycleindexm3} to derive the last equality. Expanding $\log(1-x)$ into a power series we can now show that the expression in  \Cref{eq2} is indeed the right hand side of \Cref{eq1} and this completes the proof.
\end{proof}

From the expression \Cref{cycindexm}, it is easy to see that for $\mathcal{A}_n \sim \mathcal{M}(n,r)$, we have
$$ \left( \chi_1(\mathcal{A}_n), \ldots, \chi_n(\mathcal{A}_n) \right) \stackrel{d}{=} (Z_1, \ldots, Z_n) \bigg \rvert \left( \sum_{i=1}^n iZ_i=n \right),$$
where $Z_i \sim \text{Poisson}(f(i,r)x^i/(i!)^2)$; where $x \in (0, \infty)$ is fixed. This observation tempts us to use the existing literature on Logarithmic class, as defined in \Cref{log}, discussed in marvellous detail and scope in \cite{arratiabook}. In regard to the second condition in \Cref{log} in our case, we observe that $ i\mathbb{E}Z_i = \dfrac{if(i,r)x^i}{(i!)^2}. $ For $r=2$, we have $i\mathbb{E}Z_i = x^i/2$ and hence it satisfies the conditions of \Cref{log} with $\theta =1/2$ and for the choice $x=1$. The rich theory developed in \cite{arratiabook} can be hence applied to yield the results for $r \geq 2$. Instead of using such complicated results, we shall prove them using only the power series expression in \Cref{cycleindexm} in this section. In the next section we shall demonstrate that the results for $r=2$ case can be obtained only using the relevant results for random permutations. As we shall see later for the case $r \geq 3$, we have $i\mathbb{Z}_i \to \infty$ and hence the theory developed in \cite{arratiabook} does not apply.

Further computations using the power series expressions presented in  \Cref{cycleindexm} and \Cref{cycleindexm2} enable us to write the following analogue for the second assertion of \Cref{cycleindex3} in context of uniform magical squares.

\begin{theorem}{\label{ascomp}}
Let $r \geq 2$ and $\mathcal{A}_n \sim \text{Uniform}(\mathcal{M}(n,r))$. Then 
$$ \left( \chi_1(\mathcal{A}_n), \chi_2(\mathcal{A}_n), \ldots\right) \stackrel{d}{\longrightarrow} (Z_1,Z_2,\ldots), \; \text{ as } n \to \infty,$$
where 
\begin{enumerate}
\item for the case $r=2$, we have $\left\{Z_i : i \geq 1\right\}$ to be a collection of independent poisson variables with $Z_1 \sim \text{Poisson}(1)$ and $Z_i \sim \text{Poisson}(1/(2i))$, for all $i \geq 2$.
\item for the case $r \geq 3$, we have $Z_i \equiv 0$ for all $i \geq 1$.  
\end{enumerate}
\end{theorem}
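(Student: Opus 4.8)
The plan is to deduce both parts from a single closed form for the joint factorial moments of $(\chi_1(\mathcal{A}_n),\dots,\chi_k(\mathcal{A}_n))$ and then read off the two regimes. Write $(x)_b:=x(x-1)\cdots(x-b+1)$ for the falling factorial, and recall the structural fact already behind \eqref{cycindexm}: if $(S,T)$ is an irreducible $i$-component of $A\in\mathcal{M}(n,r)$, then $A(S,T)\in\mathcal{M}(i,r)$ forces every row of $A$ indexed by $S$ to attain its sum $r$ within the columns indexed by $T$, so $A$ is supported on $(S\times T)\cup(S^{c}\times T^{c})$ with $A(S^{c},T^{c})\in\mathcal{M}(n-i,r)$, and distinct irreducible components are vertex-disjoint. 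Consequently, for fixed $b_1,\dots,b_k\ge0$ and $m:=\sum_{i=1}^{k}ib_i$, an ordered tuple of pairwise distinct irreducible components carrying $b_i$ blocks of size $i$ amounts to: a choice of ordered disjoint row-blocks and column-blocks of sizes $1^{b_1}2^{b_2}\cdots k^{b_k}$ (there are $\big(n!/((n-m)!\,\prod_i (i!)^{b_i})\big)^{2}$ of these), an irreducible magical square on each block ($\prod_i f(i,r)^{b_i}$ choices), and an arbitrary element of $\mathcal{M}(n-m,r)$ on the complementary $(n-m)\times(n-m)$ block. Counting these configurations and dividing by $H(n,r)$ gives
\[
\mathbb{E}\Big[\prod_{i=1}^{k}\big(\chi_i(\mathcal{A}_n)\big)_{b_i}\Big]\;=\;\Big(\prod_{i=1}^{k}\lambda_{i,r}^{\,b_i}\Big)\,\frac{H(n-m,r)/((n-m)!)^{2}}{H(n,r)/(n!)^{2}},\qquad \lambda_{i,r}:=\frac{f(i,r)}{(i!)^{2}}.
\]

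For the case $r=2$, the known values $f(1,2)=1$ and $f(i,2)=i!(i-1)!/2$ for $i\ge2$ give $\lambda_{1,2}=1$ and $\lambda_{i,2}=1/(2i)$ for $i\ge2$, while the asymptotic $H(n,2)/(n!)^{2}\sim\sqrt{e}/\sqrt{\pi n}$ noted after \eqref{asymp} makes the trailing ratio in the display tend to $1$ for every fixed $m$. Hence $\mathbb{E}\big[\prod_{i=1}^{k}(\chi_i(\mathcal{A}_n))_{b_i}\big]\to\prod_{i=1}^{k}\lambda_{i,2}^{\,b_i}$, which is precisely $\prod_{i=1}^{k}\mathbb{E}\big[(Z_i)_{b_i}\big]$ for independent $Z_i\sim\mathrm{Poisson}(\lambda_{i,2})$. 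Since the law of a vector of independent Poisson variables is determined by its factorial moments, convergence of all joint factorial moments to those of $(Z_1,\dots,Z_k)$ yields $(\chi_1(\mathcal{A}_n),\dots,\chi_k(\mathcal{A}_n))\stackrel{d}{\to}(Z_1,\dots,Z_k)$ for each $k$, and since $k$ is arbitrary this gives the stated convergence of the full sequence, proving part (1).

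For the case $r\ge3$ it suffices to control first moments: taking $k=i$ and $b_i=1$ in the display, and using $f(i,r)\le H(i,r)$, one gets $\mathbb{E}\chi_i(\mathcal{A}_n)\le\binom{n}{i}^{2}H(i,r)H(n-i,r)/H(n,r)$. Substituting \eqref{asymp} together with Stirling's formula, $H(n-i,r)/H(n,r)\sim(r!)^{2i}(nr)^{-ir}$ and $\binom{n}{i}^{2}\sim n^{2i}/(i!)^{2}$, so $\mathbb{E}\chi_i(\mathcal{A}_n)=O\big(n^{i(2-r)}\big)\to0$ because $i(2-r)\le-1$ when $r\ge3$. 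Therefore, for each fixed $k$, $\mathbb{P}\big(\chi_i(\mathcal{A}_n)\ge1\text{ for some }i\le k\big)\le\sum_{i=1}^{k}\mathbb{E}\chi_i(\mathcal{A}_n)\to0$, so $(\chi_1(\mathcal{A}_n),\dots,\chi_k(\mathcal{A}_n))$ converges to $(0,\dots,0)$ in probability and hence in distribution; letting $k$ vary proves part (2).

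I expect the only subtle points to be routine. The main one is the combinatorial bookkeeping behind the moment formula — keeping the irreducible counts $f(i,r)$ rather than $H(i,r)$ on the blocks, checking that a chosen family of irreducible components genuinely splits the matrix as described, and counting the ordered disjoint blocks correctly. The one analytic input, the limit $\big(H(n-m,2)/((n-m)!)^{2}\big)\big/\big(H(n,2)/(n!)^{2}\big)\to1$, is immediate from the stated $\sqrt{e}/\sqrt{\pi n}$ asymptotic; alternatively it follows from a Hardy--Littlewood--Karamata Tauberian theorem applied to the generating function $\sqrt{e^{x}/(1-x)}$ in \eqref{cycleindexm3}, whose square-root singularity at $x=1$ is precisely the feature that keeps the $r=2$ component spectrum logarithmic while forcing the $r\ge3$ spectrum to degenerate. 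The interchange-of-limits in the factorial-moment method (and in the Tauberian step, if invoked) is the most technical ingredient, but it is entirely standard, and given \eqref{cycindexm} and \eqref{asymp} I foresee no genuine obstacle.
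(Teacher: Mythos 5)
Your proof is correct and reaches exactly the same key identity as the paper, namely
\[
\mathbb{E}\prod_{i=1}^{k}\big(\chi_i(\mathcal{A}_n)\big)_{b_i}
=\prod_{i=1}^{k}\Big(\tfrac{f(i,r)}{(i!)^2}\Big)^{b_i}\,
\frac{(n!)^2}{H(n,r)}\cdot\frac{H(n-m,r)}{((n-m)!)^2},\qquad m=\sum_i ib_i,
\]
and then concludes by method of moments using \eqref{asymp}. The route to this identity is where you genuinely diverge from the paper: the paper obtains it by taking formal partial derivatives $\partial_{t_1}^{b_1}\cdots\partial_{t_k}^{b_k}$ of the cycle-index identity \eqref{cycleindexm}, comparing coefficients, and then sending $s_i\uparrow 1$ by monotone convergence; you instead count directly the pairs $(A,\text{ordered tuple of distinct irreducible components})$ via ordered disjoint row- and column-blocks, irreducible fills (this is where $f(i,r)$ rather than $H(i,r)$ correctly enters), and an arbitrary $\mathcal{M}(n-m,r)$ completion on the complementary block. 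Your combinatorial derivation is more elementary and self-contained and sidesteps the formal-differentiation and interchange-of-limits bookkeeping; the paper's generating-function route buys uniformity with the rest of Section~2, where the same identity \eqref{cycleindexm} is also exploited to get the CLT for the number of components. In the $r\ge 3$ case you shorten the argument to a first-moment union bound (which suffices for a degenerate limit) whereas the paper shows all factorial moments vanish, but this is cosmetic. One small remark on presentation: the vertex-disjointness of distinct irreducible components (equivalently, that they correspond to distinct connected components of $G_A$) is the linchpin of your bijection, and you state it without proof; it follows immediately from the graph correspondence already set up in the introduction, but it is worth flagging explicitly since the bijection fails without it.
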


\begin{proof}
Fix $k \geq 1$. We shall show convergence of fractional moments of all order for the random vector $(\chi_1(\mathcal{A}_n), \ldots, \chi_k(\mathcal{A}_n))$ to the fractional moments of corresponding limiting distribution. We start with introducing the following notation.
$$ C_n(t_1, \ldots,t_k) := \mathbb{E}_n \prod_{i=1}^k t_i^{\chi(\mathcal{A}_n)},$$
for any $t_1, \ldots,t_k \in (0,1) $. Here $\mathbb{E}_n$ refers to the expectation with respect to the uniform distribution on $\mathcal{M}(n,r)$. Observe that 
$$ \partial_{t_1}^{r_1}\ldots \partial_{t_k}^{r_k} C_n(t_1, \ldots, t_k) \rvert_{t_i=s_i} \uparrow \mathbb{E}_n \prod_{i=1}^k (\chi_i(\mathcal{A}_n))_{(r_i)}, \; \text{ as } s_1, \ldots,s_k \uparrow 1,$$
for any $r_1, \ldots,r_k \in \mathbb{N} \cup \left\{0\right\}$ and $x_{(k)} := x(x-1)\cdots(x-k+1)$ for any $k \geq 1$ and $x_{(0)} :=1$. Plugging in $t_i=1$ for all $i >k$ in the formal power series identity in \Cref{cycleindexm} we can write the following identity.
\begin{equation}{\label{eq3}}
\sum_{n \geq 0} \dfrac{H(n,r)x^n}{(n!)^2} C_n(t_1, \ldots, t_k) = \exp \left( \sum_{n =1}^k \dfrac{f(n,r)t_nx^n}{(n!)^2} + \sum_{n >k } \dfrac{f(n,r)x^n}{(n!)^2}  \right).
\end{equation}
Differentiating formally both sides of the formal power series identity in \Cref{eq3} and comparing coefficients, we can arrive at the following conclusion.
\begin{align*}
&\partial_{t_1}^{r_1}\ldots \partial_{t_k}^{r_k} C_n(t_1, \ldots, t_k)  \rvert_{t_i=s_i} \\
&= \dfrac{(n!)^2}{H(n,r)}\textsc{Coeff}\left( \exp \left( \sum_{m =1}^k \dfrac{f(m,r)s_mx^m}{(m!)^2}+ \sum_{m >k } \dfrac{f(m,r)x^m}{(m!)^2} \right) \prod_{m=1}^k \left( \dfrac{f(m,r)x^m}{(m!)^2}\right)^{r_m} ; n\right),
\end{align*} 
for any $s_1, \ldots, s_k \in (0,1)$. Since all the co-efficients of the formal power series encountered are non-negative, we can take $s_1, \ldots,s_k \uparrow 1$ to conclude that 
\begin{align}
\mathbb{E}_n \prod_{i=1}^k (\chi_i(\mathcal{A}_n))_{(r_i)} &= \dfrac{(n!)^2}{H(n,r)}\textsc{Coeff}\left( \exp \left( \sum_{m \geq 1} \dfrac{f(m,r)x^m}{(m!)^2} \right) \prod_{m=1}^k \left( \dfrac{f(m,r)x^m}{(m!)^2}\right)^{r_m} ; n\right) \nonumber \\
& = \prod_{m=1}^k \left( \dfrac{f(m,r)}{(m!)^2}\right)^{r_m} \dfrac{(n!)^2}{H(n,r)} \textsc{Coeff}\left( \exp \left( \sum_{m \geq 1} \dfrac{f(m,r)x^m}{(m!)^2} \right) ; n - \sum_{m =1}^k mr_m\right) \nonumber \\
&= \prod_{m=1}^k \left( \dfrac{f(m,r)}{(m!)^2}\right)^{r_m} \dfrac{(n!)^2}{H(n,r)} \textsc{Coeff}\left(  \sum_{m \geq 0} \dfrac{H(m,r)x^m}{(m!)^2} ; n - \sum_{m =1}^k mr_m\right) \nonumber \\
& = \prod_{m=1}^k \left( \dfrac{f(m,r)}{(m!)^2}\right)^{r_m} \dfrac{(n!)^2}{H(n,r)} \dfrac{H(n-\sum_{m=1}^k mr_m)}{(n-\sum_{m=1}^k mr_m)!^2}, \label{frac}
\end{align}
for any $n \geq \sum_{m=1}^k mr_m.$ A brief routine computation using \Cref{asymp} and Stirling's approximation guarantee that for any $r \geq 2$ and $m \in \mathbb{N}$, we have
$$ \dfrac{H(n,r)}{(n!)^2} \sim \dfrac{H(n-m,r)}{((n-m)!)^2} \dfrac{r^{mr}}{(r!)^{2m}} n^{m(r-2)}, \; \text{ as } n \to \infty.$$
Plugging in the above mentioned asymptotic expression into \Cref{frac} and recalling that the values for $f(n,2)$ for any $n \geq 1$, we can conclude the following.
\begin{equation}{\label{res1}}
\lim_{n \to \infty} \mathbb{E}_n \prod_{i=1}^k (\chi_i(\mathcal{A}_n))_{(r_i)} = \begin{cases}
\prod_{m=2}^k (2m)^{-r_m}, \text{ if } r=2, \\
0, \text{if } r \geq 3.
\end{cases}
\end{equation}
Recalling the formula for fractional moments of Poisson random variables, we can conclude the proof. 
\end{proof}

From \Cref{ascomp}, one can easily conclude the asymptotic distribution for the size of the smallest component in case of $r =2$ as established below.

\begin{corollary}{\label{assmall2}}
Let $\mathcal{A}_n \sim \text{Uniform}(\mathcal{M}(n,2))$. Then 
$$ S_n := \inf \left\{k \geq 1 : \chi_k(\mathcal{A}_n) >0 \right\} \stackrel{d}{\longrightarrow} S_{\infty}, \; \text{with} \; \mathbb{P}(S_{\infty}=r) = \begin{cases}
1-e^{-1},& \; \text{ if } r=1, \\
\exp \left(-\dfrac{1}{2}-\dfrac{H_{r-1}}{2} \right)\left( 1- e^{-1/(2r)}\right), & \; \text{ if } r >1,
\end{cases} $$
where $H_r := \sum_{k =1}^r k^{-1}$, for all $r \geq 1$. 
\end{corollary}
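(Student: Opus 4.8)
The plan is to exhibit the limiting law explicitly. Let $(Z_1, Z_2, \ldots)$ be the independent Poisson family appearing in the $r=2$ case of \Cref{ascomp}, so $Z_1 \sim \text{Poisson}(1)$ and $Z_k \sim \text{Poisson}(1/(2k))$ for $k \geq 2$, and set $S_\infty := \inf\{k \geq 1 : Z_k > 0\}$. First I would record the law of $S_\infty$: by independence,
\[
\mathbb{P}(S_\infty > m) = \mathbb{P}(Z_1 = 0)\prod_{k=2}^{m} \mathbb{P}(Z_k = 0) = e^{-1}\prod_{k=2}^{m} e^{-1/(2k)} = \exp\!\Bigl(-\tfrac12 - \tfrac12 H_m\Bigr), \qquad m \geq 1,
\]
while $\mathbb{P}(S_\infty > 0) = 1$. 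Taking successive differences $\mathbb{P}(S_\infty = r) = \mathbb{P}(S_\infty > r-1) - \mathbb{P}(S_\infty > r)$ and using $H_r = H_{r-1} + 1/r$ reproduces exactly the piecewise formula in the statement (for $r=1$ one gets $1 - e^{-1}$, for $r>1$ one gets $\exp(-\tfrac12 - \tfrac12 H_{r-1})(1 - e^{-1/(2r)})$). Since $H_m \to \infty$ we have $\lim_{m}\mathbb{P}(S_\infty > m) = 0$, hence $S_\infty$ is $\mathbb{N}$-valued almost surely and $\sum_{r \geq 1}\mathbb{P}(S_\infty = r) = 1$.

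Next I would transfer the convergence from \Cref{ascomp}. For each fixed $r$, the event $\{S_n > r\}$ is identically the event $\{\chi_1(\mathcal{A}_n) = \cdots = \chi_r(\mathcal{A}_n) = 0\}$, which depends only on the first $r$ coordinates of the component spectrum. By \Cref{ascomp} and the continuity of the projection onto the first $r$ coordinates, $(\chi_1(\mathcal{A}_n), \ldots, \chi_r(\mathcal{A}_n)) \stackrel{d}{\longrightarrow} (Z_1, \ldots, Z_r)$; as $\{(0,\ldots,0)\}$ is a continuity set for this discrete limiting law, it follows that $\mathbb{P}(S_n > r) \to \mathbb{P}(Z_1 = \cdots = Z_r = 0) = \mathbb{P}(S_\infty > r)$. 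Differencing in $r$ gives $\mathbb{P}(S_n = r) \to \mathbb{P}(S_\infty = r)$ for every $r \geq 1$; since the limits sum to $1$ by the previous paragraph, this pointwise convergence of probability mass functions to a probability mass function is equivalent to $S_n \stackrel{d}{\longrightarrow} S_\infty$ (Scheff\'e), which is the claim.

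This is a soft consequence of \Cref{ascomp}, so I do not anticipate a real obstacle; the only delicate point is ruling out escape of mass, i.e.\ showing that the candidate limit $S_\infty$ is almost surely finite, and this is precisely where the divergence of the harmonic series $H_m \to \infty$ enters. It is also what distinguishes the case $r=2$ from $r \geq 3$: in the latter case every $\chi_k(\mathcal{A}_n) \to 0$, so $S_n \to \infty$ in probability and there is no nondegenerate limiting law for the smallest component.
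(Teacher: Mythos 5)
Your argument is correct and follows essentially the same route as the paper: compute $\mathbb{P}(S_\infty>m)=\exp(-\tfrac12-\tfrac12 H_m)$ from the independent Poisson limit in \Cref{ascomp}, transfer the convergence via the finite-dimensional projection $(\chi_1,\ldots,\chi_r)$, and use $H_m\to\infty$ to rule out escape of mass. Your write-up is slightly more explicit about the Scheff\'e step and the tightness issue, but there is no substantive difference from the paper's proof.
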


\begin{proof}
We continue with the notations introduced in the statement of \Cref{ascomp}. For any $r \geq 1$, we have the following from \Cref{assmall2}.
\begin{align*}
\mathbb{P}(S_n > r) = \mathbb{P} \left( \chi_i(\mathcal{A}_n) =0, \; \forall \; i=1, \ldots, r\right) 
& \stackrel{n \to \infty}{\longrightarrow} \mathbb{P} \left( Z_i =0, \; \forall \; i=1, \ldots, r\right) \\
& = \exp\left( -1-\sum_{i=2}^r (2i)^{-1}\right) = \exp\left(-\dfrac{1}{2}-\dfrac{H_r}{2} \right).
\end{align*}
Note that, $S_{\infty}$ takes value  in $\mathbb{N}$ with probability $1$ since $H_r \to \infty$ as $r \to \infty$. 
\end{proof}

From \Cref{ascomp}, it is clear that for the case $r \geq 3$, the random element $\mathcal{A}_n \sim \text{Uniform}(\mathcal{A}(n,r))$ has no component of size $O(1)$ with high probability. In fact we shall show that the uniform element from $\mathcal{M}(n,r)$ is typically irreducible for large $n$. 

\begin{theorem}{\label{rgeq3}}
Fix $r \geq 3$ and consider $\mathcal{A}_n \sim \text{Uniform}(\mathcal{M}(n,r))$. Then $\mathbb{P}(S_n=n) \longrightarrow 1$ as $n \to \infty$. 
\end{theorem}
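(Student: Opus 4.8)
The plan is to run a first--moment (union bound) argument, which reduces the claim to a summability estimate that goes through precisely because $r\ge 3$. The matrix $\mathcal{A}_n$ itself is always an $n$-component, so $\chi_n(\mathcal{A}_n)\ge 1$ and $S_n\le n$ deterministically; moreover $S_n=n$ holds exactly when $\chi_k(\mathcal{A}_n)=0$ for every $k\in[n-1]$, i.e.\ exactly when $\mathcal{A}_n$ is irreducible (equivalently, $G_{\mathcal{A}_n}$ is connected). Hence $\mathbb{P}(S_n<n)=\mathbb{P}\bigl(\bigcup_{k=1}^{n-1}\{\chi_k(\mathcal{A}_n)\ge1\}\bigr)\le\sum_{k=1}^{n-1}\mathbb{E}_n\,\chi_k(\mathcal{A}_n)$, and it is enough to show the right-hand side vanishes.

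For the individual means I would specialise \Cref{frac} (take its index ``$k$'' to be $j$ and all the exponents $r_m$ equal to $0$ except $r_j=1$), obtaining $\mathbb{E}_n\chi_j(\mathcal{A}_n)=\binom{n}{j}^2 f(j,r)H(n-j,r)/H(n,r)$ for $1\le j\le n-1$; this is also immediate from first principles, since an irreducible $j$-component is specified by its index set $(S,T)$ ($\binom{n}{j}^2$ choices), an irreducible filling of the $j\times j$ block ($f(j,r)$ choices) and an arbitrary filling of the complementary $(n-j)\times(n-j)$ block ($H(n-j,r)$ choices), the two off-diagonal blocks being forced to vanish. I then bound $f(j,r)\le H(j,r)$ and feed in \Cref{asymp}: since $H(m,r)(r!)^{2m}/(mr)!\to e^{(r-1)^2/2}\in(0,\infty)$, this ratio lies in a fixed interval $[\alpha_r,\beta_r]\subset(0,\infty)$ for every $m\ge 1$, so after the powers of $r!$ cancel one gets $\mathbb{E}_n\chi_j(\mathcal{A}_n)\le (\beta_r^2/\alpha_r)\,\binom{n}{j}^2\big/\binom{nr}{jr}$.

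The decisive estimate is then the elementary inequality $\binom{nr}{jr}\ge\binom{n}{j}^r$ (arrange $nr$ objects in an $r\times n$ grid and choose $j$ cells from each of the $r$ rows), which yields $\binom{n}{j}^2/\binom{nr}{jr}\le\binom{n}{j}^{2-r}\le\binom{n}{j}^{-1}$ whenever $r\ge 3$. Summing, $\mathbb{P}(S_n<n)\le(\beta_r^2/\alpha_r)\sum_{j=1}^{n-1}\binom{n}{j}^{-1}$, and bounding the $j=1$ and $j=n-1$ terms by $1/n$ and each of the remaining terms by $1/\binom{n}{2}$ shows $\sum_{j=1}^{n-1}\binom{n}{j}^{-1}=O(1/n)\to 0$; in fact this even delivers the rate $\mathbb{P}(S_n<n)=O(n^{2-r})$, dominated by the smallest and largest off-diagonal blocks.

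The step I expect to require the most care is not the combinatorics but the uniformity in $j$: the sum runs over all $j$ from $1$ to $n-1$, including the ranges $j=O(1)$ and $n-j=O(1)$ where the $O(1/m)$ error term in \Cref{asymp} is not negligible, so one must replace that asymptotic by the honest two-sided bound $\alpha_r\le H(m,r)(r!)^{2m}/(mr)!\le\beta_r$ valid for every $m\ge 1$ (legitimate because the sequence converges to a positive limit). It is also worth noting explicitly that the argument genuinely uses $r\ge 3$: for $r=2$ the bound degenerates to $\binom{n}{j}^{0}=1$, the sum becomes $n-1\to\infty$, and indeed that is the case where the component structure is non-trivial.
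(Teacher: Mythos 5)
Your argument is correct, and it is the same first--moment strategy the paper uses — reduce to showing $\sum_{j}\mathbb{E}_n\chi_j(\mathcal{A}_n)\to 0$ starting from \Cref{frac} and the bound $f(j,r)\le H(j,r)$ — but the way you control the individual expectations is genuinely different and, in my view, cleaner. The paper substitutes Stirling's formula into $H(m,r)/(m!)^2$, restricts to the range $K_\varepsilon\le i\le n/2$ where that approximation is sharp enough to yield $\mathbb{E}_n\chi_i\le C e^{-i(r-2)}$, handles the residual small-$i$ terms by appealing back to \cref{res1}, and finally invokes the pigeonhole observation that a reducible matrix must have a component of size at most $n/2$. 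You sidestep all of this: you keep the factorials assembled as $\binom{n}{j}^2/\binom{nr}{jr}$, use the two-sided bound $\alpha_r\le H(m,r)(r!)^{2m}/(mr)!\le\beta_r$ (valid for every $m\ge 1$ because the sequence converges to a positive limit), and close with the elementary combinatorial inequality $\binom{nr}{jr}\ge\binom{n}{j}^{r}$, which reduces everything to $\sum_{j=1}^{n-1}\binom{n}{j}^{-1}=O(1/n)$. This gives a single estimate valid uniformly in $j\in[n-1]$, avoids both the split of the range of $j$ and the separate appeal to \cref{res1}, and has the side benefit of producing the explicit rate $\mathbb{P}(S_n<n)=O(n^{2-r})$, which the paper's argument does not state. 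You also correctly flag that the only place needing care — extracting uniform constants from the $O(1/n)$ error in \cref{asymp} — is handled by the convergence-to-a-positive-limit observation, and that the argument collapses exactly when $r=2$ since $\binom{n}{j}^{2-r}\equiv 1$.
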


\begin{proof}
For any $1 \leq i \leq n$, we plug-in $r_i=1$ and $r_j =0$ for all $j \neq i$ in \Cref{frac} to obtain the following.
$$ \mathbb{E}_n \chi_i(\mathcal{A}_n) = \dfrac{f(i,r)}{(i!)^2} \dfrac{H(n-i,r)}{(n-i)!^2} \dfrac{n!^2}{H(n,r)} \leq  \dfrac{H(i,r)}{(i!)^2} \dfrac{H(n-i,r)}{(n-i)!^2} \dfrac{n!^2}{H(n,r)}, \; \forall \; 1 \leq i \leq n.$$
Fix $\varepsilon >0$. The asymptotic formula in \Cref{asymp} and Stirling's approximation yields that 
$$  \dfrac{H(m,r)}{m!^2} \sim  \dfrac{\sqrt{2\pi} r^{mr+1/2}m^{mr+1/2}e^{-mr+(r-1)^2/2}}{(r!)^{2m}2\pi m^{2m+1}e^{-2m}} = \dfrac{r^{mr+1/2}m^{m(r-2)-1/2}e^{(r-1)^2/2}}{\sqrt{2\pi}(r!)^{2m}e^{m(r-2)}}, \; \text{ as } m \to \infty.$$ 
A small computation now shows that there exists $K_{\varepsilon} \in \mathbb{N}$ such that for any $n$ and $i \in  [n]$, satisfying $i,n-i \geq K_{\varepsilon}$, we have the following.
\begin{align*}
 \mathbb{E}_n \chi_i(\mathcal{A}_n)  \leq  \dfrac{H(i,r)}{(i!)^2} \dfrac{H(n-i,r)}{(n-i)!^2} \dfrac{n!^2}{H(n,r)} &\leq (1+\varepsilon) \dfrac{\sqrt{r}i^{i(r-2)-1/2}(n-i)^{(n-i)(r-2)-1/2}e^{(r-1)^2/2}}{\sqrt{2\pi}n^{n(r-2)-1/2}}  \\
 & = (1+\varepsilon) \dfrac{\sqrt{r}e^{(r-1)^2/2}}{\sqrt{2\pi}}\left( \dfrac{i}{n}\right)^{i(r-2)} \left( 1- \dfrac{i}{n}\right)^{(n-i)(r-2)} \sqrt{\dfrac{n}{i(n-i)}} \\
 & \leq (1+\varepsilon) \dfrac{\sqrt{r}e^{(r-1)^2/2}}{\sqrt{\pi}}\left( \dfrac{i}{n}\right)^{i(r-2)} \left( 1- \dfrac{i}{n}\right)^{(n-i)(r-2)} \\
 & \leq (1+\varepsilon) \dfrac{\sqrt{r}e^{(r-1)^2/2}}{\sqrt{\pi}}\left( \dfrac{i}{n} \vee \left(1- \dfrac{i}{n} \right) \right)^{n(r-2)}. 
\end{align*}
Therefore, for any $i$ satisfying $K_{\varepsilon} \leq i \leq n/2$, we have 
$$  \mathbb{E}_n \chi_i(\mathcal{A}_n)  \leq (1+\varepsilon) \dfrac{\sqrt{r}e^{(r-1)^2/2}}{\sqrt{\pi}} \left(1- \dfrac{i}{n} \right)^{n(r-2)} \leq (1+\varepsilon) \dfrac{\sqrt{r}e^{(r-1)^2/2}}{\sqrt{\pi}}  \exp(-i(r-2)). $$
For any $l \geq K_{\varepsilon}$, the above estimate yields the following.
\begin{align*}
\mathbb{P}\left( S_n \leq n/2\right) = \mathbb{P} \left( \sum_{i \geq 1 : i \leq n/2} \chi_i(\mathcal{A}_n) \geq 1\right) & \leq \mathbb{E} \left( \sum_{i \geq 1 : i \leq n/2} \chi_i(\mathcal{A}_n)\right) \\
& \leq  \sum_{i=1}^{l} \mathbb{E}_n \chi_i(\mathcal{A}_n) + (1+\varepsilon) \dfrac{\sqrt{r}e^{(r-1)^2/2}}{\sqrt{\pi}} \sum_{i \geq l} \exp(-i(r-2)) \\
& \leq \sum_{i=1}^{l} \mathbb{E}_n \chi_i(\mathcal{A}_n) + (1+\varepsilon) \dfrac{\sqrt{r}e^{(r-1)^2/2}}{\sqrt{\pi}(1-\exp(-(r-2)))} \exp(-l(r-2)). 
\end{align*}
By $\Cref{res1}$, we have that $\mathbb{E}_n \chi_i(\mathcal{A}_n) =o(1)$, for any fixed $i$ and therefore
$$ \limsup_{n \to \infty} \mathbb{P}\left( S_n \leq n/2\right) \leq (1+\varepsilon) \dfrac{\sqrt{r}e^{(r-1)^2/2}}{\sqrt{\pi}(1-\exp(-(r-2)))} \exp(-l(r-2)).$$
Since $l$ can be taken arbitrarily large, we conclude that $\mathbb{P}\left( S_n \leq n/2\right) = o(1)$. But we observe that either the smallest component size of $\mathcal{A}_n$ is at most $n/2$ or the matrix $\mathcal{A}_n$ is irreducible, i.e., the smallest component is of size $n$. This completes the proof.  
\end{proof}

\begin{remark}
Since $\mathbb{P}(S_n=n) = f(n,r)/H(n,r)$, \Cref{rgeq3} guarantees that $f(n,r)\sim H(n,r)$ as $n \to \infty$ for $r \geq 3$. 
\end{remark}

The next statistic that we shall consider is the number of components in the matrix $\mathcal{A}_n \sim \text{Uniform}(\mathcal{M}(n,r))$, i.e., $ C_n = C(\mathcal{A}_n):= \sum_{i=1}^n \chi_i(\mathcal{A}_n)$. It is obvious that if the matrix $\mathcal{A}_n$ is irreducible, i.e., $S_n=n$, we have $C_n=1$ and hence \Cref{rgeq3} guarantees that $C_n$ converges in probability to $1$ for $r \geq 3$. For the case $r=2$, we shall be able to write down a central limit theorem for $C_n$. To guess the scaling and centering for $C_n$, observe that 
$$ \mathbb{E}C_n = \sum_{i=1}^n \mathbb{E}\chi_i(\mathcal{A}_n) \approx 1+ \sum_{i=2}^n \dfrac{1}{2i} \approx (\log n)/2,$$
and similarly 
$$\operatorname{Var}(C_n) \approx \sum_{i=1}^n \operatorname{Var}(\chi_i(\mathcal{A}_n)) \approx 1+ \sum_{i=2}^n \dfrac{1}{2i} \approx (\log n)/2.$$ The following CLT says that typically a matrix from $\mathcal{M}(n,2)$ has $(\log n)/2 \pm \sqrt{2\log n}$ many irreducible components.

\begin{theorem}
Let $\mathcal{A}_n \sim \text{Uniform}(\mathcal{M}(n,r))$ and $ C_n = C(\mathcal{A}_n):= \sum_{i=1}^n \chi_i(\mathcal{A}_n)$. Then 
$$ \dfrac{C_n - \dfrac{1}{2}\log n}{\sqrt{\dfrac{1}{2}\log n}} \stackrel{d}{\longrightarrow} N(0,1).$$
\end{theorem}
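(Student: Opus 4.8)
The plan is to read off the probability generating function of $C_n$ from the master identity \Cref{cycleindexm3} and then extract the Gaussian limit by singularity analysis. Putting $t_1=t_2=\cdots=s$ in \Cref{cycleindexm3} and using $\sum_{n\geq 2}x^n/n=-\log(1-x)-x$, the right-hand side collapses to $\exp\!\big(\tfrac{sx}{2}\big)(1-x)^{-s/2}$; writing $F(x,s):=e^{sx/2}(1-x)^{-s/2}$ one therefore obtains
\[
\sum_{n\geq 0}\frac{H(n,2)}{(n!)^2}\,\mathbb{E}_n\!\big[s^{C_n}\big]\,x^n = F(x,s),\qquad F(x,1)=\sqrt{\frac{e^x}{1-x}},
\]
so that $\mathbb{E}_n[s^{C_n}]=\textsc{Coeff}(F(\cdot,s);n)\big/\textsc{Coeff}(F(\cdot,1);n)$ for every $n$, where $\mathbb{E}_n$ denotes expectation under the uniform law on $\mathcal{M}(n,2)$.

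Next I would apply singularity analysis to $F(\cdot,s)$. Since $e^{sx/2}$ is entire, the only singularity of $F(x,s)$ is the algebraic one at $x=1$, and the Flajolet--Odlyzko transfer theorem gives
\[
\textsc{Coeff}(F(\cdot,s);n)=\frac{e^{s/2}}{\Gamma(s/2)}\,n^{s/2-1}\big(1+o(1)\big),
\]
uniformly for $s$ in a fixed complex neighbourhood of $1$ (with $\Re s$ bounded away from $0$). Specialising $s=1$ recovers $H(n,2)/(n!)^2\sim\sqrt{e}/\sqrt{\pi n}$, and dividing gives
\[
\mathbb{E}_n\!\big[s^{C_n}\big]=\frac{\sqrt{\pi}\,e^{(s-1)/2}}{\Gamma(s/2)}\,n^{(s-1)/2}\big(1+o(1)\big),
\]
again uniformly for $s$ near $1$.

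This is then turned into the CLT. Set $\sigma_n^2=\mu_n=\tfrac12\log n$ and, for fixed $t\in\mathbb{R}$, take $s=e^{it/\sigma_n}\to 1$. Taking logarithms above, the prefactor $\sqrt{\pi}\,e^{(s-1)/2}/\Gamma(s/2)$ tends to $\sqrt{\pi}/\Gamma(\tfrac12)=1$, while the Taylor expansion $s-1=it/\sigma_n-t^2/(2\sigma_n^2)+O(\sigma_n^{-3})$ combined with the identities $\tfrac{\log n}{2\sigma_n}=\sigma_n=\mu_n/\sigma_n$ and $\tfrac{\log n}{4\sigma_n^2}=\tfrac12$ yields
\[
\log\mathbb{E}_n\!\big[e^{itC_n/\sigma_n}\big]=\tfrac{s-1}{2}\log n+o(1)=it\,\tfrac{\mu_n}{\sigma_n}-\tfrac{t^2}{2}+o(1),
\]
hence $\mathbb{E}_n[\exp(it(C_n-\mu_n)/\sigma_n)]\to e^{-t^2/2}$ for every $t$, and L\'evy's continuity theorem finishes the proof. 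Equivalently one may write $\mathbb{E}_n[s^{C_n}]\sim A(s)B(s)^{\log n}$ with $B(s)=e^{(s-1)/2}$ and $A(1)=B(1)=1$, and invoke Hwang's quasi-powers theorem: since $(\log B)'(1)=\tfrac12$ and $(\log B)''(1)=0$, both the mean and the variance come out $\sim\tfrac12\log n$ and the limit law is Gaussian, which pins down the centering and scaling claimed in the statement.

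The only genuinely delicate point, and the one I would expect to need the most care, is the \emph{uniformity} in $s$ of the coefficient asymptotics of the second step: this is what licenses substituting an $n$-dependent $s=e^{it/\sigma_n}$ into it. It is covered by the standard uniform form of the transfer theorem, but if one prefers to keep everything self-contained it can be obtained by hand from $\textsc{Coeff}(F(\cdot,s);n)=\sum_{k=0}^n\frac{(s/2)^k}{k!}\binom{n-k+s/2-1}{n-k}$ via Stirling's formula for the ratio $\Gamma(n-k+s/2)/\Gamma(n-k+1)$ together with a dominated-convergence argument for the sum over $k$. Everything else is the routine bookkeeping carried out in the displays above.
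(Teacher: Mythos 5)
Your proposal is correct and reaches the claimed Gaussian limit; the difference from the paper lies in how the coefficient asymptotics of $F(x,s)=e^{sx/2}(1-x)^{-s/2}$ are extracted. You set up exactly the same starting identity
\[
\sum_{n\geq 0}\frac{H(n,2)}{(n!)^2}\,\mathbb{E}_n\!\big[s^{C_n}\big]\,x^n = F(x,s),
\]
and from there your \emph{main} route invokes the uniform Flajolet--Odlyzko transfer theorem (or, equivalently, Hwang's quasi-powers theorem) to conclude $\textsc{Coeff}(F(\cdot,s);n)\sim e^{s/2}n^{s/2-1}/\Gamma(s/2)$ uniformly in $s$ near $1$, and then passes an $n$-dependent $s=e^{it/\sigma_n}$ through it. The paper does not appeal to any such black-box: it writes out the exact coefficient via Leibniz,
\[
\frac{H(n,2)}{(n!)^2}\,\mathbb{E}_n\!\big[t^{C_n}\big]=\frac{1}{n!}\sum_{k=0}^{n}\binom{n}{k}(t/2)^{n-k}\frac{\Gamma(t/2+k)}{\Gamma(t/2)},
\]
estimates each summand by Stirling, and justifies the interchange of limit and sum (with $t=t_n\to1$) by an explicit dominated-convergence bound, then finishes with the same change of variables. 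Your fallback argument — ``by hand from $\textsc{Coeff}(F(\cdot,s);n)=\sum_{k=0}^n\frac{(s/2)^k}{k!}\binom{n-k+s/2-1}{n-k}$ via Stirling and dominated convergence'' — is, after re-indexing $k\leftrightarrow n-k$, precisely the paper's computation, so the two proofs coincide on that route. What the singularity-analysis route buys you is brevity and avoidance of the somewhat tedious DCT verification; what the paper's route buys is self-containment, since the uniform transfer theorem (the point you correctly flag as the only delicate step) is exactly what the paper's DCT argument reproves in this special case. The only cosmetic discrepancy is the choice of normalization ($\sigma_n^2=\tfrac12\log n$ for you, $\sigma_n^2=\log n$ with a limiting $N(0,\tfrac12)$ in the paper), but these are equivalent formulations of the same CLT.
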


\begin{proof}
We start by plugging in $t_k \equiv 1$ for all $k \geq 1$ in the power series expression \Cref{cycleindexm3} and obtaining 
$$ \sum_{n \geq 0} \dfrac{H(n,2)}{n!^2} \mathbb{E} t^{C(\mathcal{A}_n)} x^n = \exp(tx/2)(1-x)^{-t/2}, \; \forall \; |x|<1.$$
Applying Leibniz rule and comparing co-efficients we obtain the following.
$$ \dfrac{H(n,2)}{n!^2} \mathbb{E} t^{C_n} = \dfrac{1}{n!}\sum_{k =0}^n {n \choose k} \dfrac{\partial^{n-k}}{\partial x^{n-k}}\exp(tx/2) \big \rvert_{x=0} \dfrac{\partial^k}{\partial x^{k}}(1-x)^{-t/2} \big \rvert_{x=0}  = \dfrac{1}{n!}\sum_{k =0}^n {n \choose k} (t/2)^{n-k} \dfrac{\Gamma(t/2+k)}{\Gamma(t/2)}.$$
Fix some sequence $t_n \to 1$ as $n \to \infty$ and define
$$ u_{n,k} = \begin{cases}
\dfrac{n!^2}{H(n,2) \Gamma(t_n/2)}  \dfrac{t_n^k \Gamma(t_n/2+n-k)}{2^k(n-k)!k!}, \; & \text{ if } n \geq k, \\
0, & \text{ otherwise }.
\end{cases}$$
Clearly, $\mathbb{E}t_n^{C_n} = \sum_{k \geq 0} u_{n,k}.$ We now apply Stirling's approximation to compute the limit of $u_{n,k}$ as $n \to \infty$ for any fixed $k \geq 0$. 
\begin{align*}
u_{n,k} &\sim \dfrac{n!^2}{H(n,2) \Gamma(t_n/2)}  \dfrac{t_n^k \Gamma(t_n/2+n-k)}{2^k(n-k)!k!} \\
& \sim \dfrac{1}{2^kk!} \dfrac{\sqrt{\pi n}}{\sqrt{e}\Gamma(1/2)} \dfrac{e^{n-k}}{\sqrt{2\pi} (n-k)^{n-k+1/2}} \dfrac{\sqrt{2\pi}(t_n/2+n-k)^{t_n/2+n-k-1/2}}{\exp(t_n/2+n-k)} \\
& \sim  \dfrac{1}{2^kk!} \dfrac{\sqrt{ n}}{\sqrt{e}} \dfrac{e^{n-k}}{\sqrt{2\pi} e^{-k}n^{n-k+1/2}} \dfrac{\sqrt{2\pi}e^{t_n/2-k}n^{t_n/2+n-k-1/2}}{\exp(t_n/2+n-k)} \\
& =  \dfrac{1}{2^kk!} \dfrac{n^{(t_n-1)/2}}{\sqrt{e}}.
\end{align*}
We want to apply DCT and conclude that 
$$ n^{(1-t_n)/2}\mathbb{E}t_n^{C_n} = n^{(1-t_n)/2} \sum_{k \geq 0} u_{n,k} \longrightarrow \sum_{k \geq 0} \dfrac{1}{2^kk!} \dfrac{1}{\sqrt{e}}=1.$$
To verify validity of DCT, we can assume without loss of generality that $1/2 \leq t_n \leq 2$ for all $n \geq 1$ and hence there exists constants $C_1,C_2 < \infty$ and $C_3,C_4 >0$ such that 
$$ \Gamma(x) \leq C_1 \sqrt{2\pi} x^{x-1/2}e^{-x}, \; \forall \; x \geq 1/4, \; (n!)^2/(H(n,2)) \leq C_2\sqrt{n}, \; \forall \; n \geq 1,$$
and 
$$ m! \geq C_3 \sqrt{2\pi} m^{m+1/2}e^{-m}, \;\forall \; m \geq 1, \; \Gamma(x) \geq C_4, \; x \geq 1/4.$$ 
Plugging-in these estimates we have
\begin{align*}
n^{(1-t_n)/2}u_{n,k} &\leq n^{(1-t_n)/2}\dfrac{1}{k!}\dfrac{e^{n-k}C_2 \sqrt{n}}{C_3 \sqrt{2\pi} (n-k)^{n-k+1/2}C_4}C_1\sqrt{2\pi} (t_n/2+n-k)^{t_n/2+n-k-1/2}e^{-t_n/2-n+k} \\
& \leq C_5\dfrac{1}{k!}  \dfrac{1}{\sqrt{n-k}} (t_n/2+n-k)^{-1/2} n^{1-t_n/2} e^{-t_n/2}(t_n/2+n-k)^{t_n/2} \left( 1+ \dfrac{t_n}{2(n-k)}\right)^{n-k} \\
& \leq C_5\dfrac{1}{k!}  \dfrac{1}{\sqrt{n-k}} (t_n/2+n-k)^{-1/2} n^{1-t_n/2} e^{-t_n/2}(t_n/2+n-k)^{t_n/2} e^{t_n/2} \; ( \text{since } (1+y) \leq e^y) \\
& \leq C_5 \dfrac{1}{k!} \sqrt{\dfrac{n}{n-k}}  \sqrt{\dfrac{n}{t_n/2+n-k}} \left( 1 + \dfrac{t_n/2-k}{n}\right)^{-t_n/2} \\
& \leq C_5 \dfrac{1}{k!} \dfrac{n}{n-k}   \left( 1 + \dfrac{1/4-k}{n}\right)^{-1} \leq C_5 \dfrac{1}{k!} \dfrac{n}{n-k}   \left( 1 - k/n\right)^{-1} \leq C_5 \dfrac{1}{k!} \left( 1 - k/n\right)^{-2} \leq \dfrac{C_5 (k+1)^2}{k!},
\end{align*}
for all $n \geq k+1$. On the otherhand,
$$ k^{(1-t_k)/2}u_{k,k} =\dfrac{t_k^k}{2^kk!} \dfrac{k!^2}{H(k,2)} k^{(1-t_k)/2} \leq \dfrac{C_2}{k!}k^{3/4}.$$
This shows that $n^{(1-t_n)/2}u_{n,k} \leq C_6k^2/(k!)$ for all $n \geq 1$ and the bound is summable in $k$, proving the validity of DCT. Fix $s \in \mathbb{R}$ and set $t_n = \exp(\sigma_n^{-1}s)$, where $\sigma_n^2 = \log n$ and $\mu_n := (\log n)/2$. Then 
\begin{align*}
\log \mathbb{E} \exp(s\sigma_n^{-1}(C_n-\mu_n)) = \log \mathbb{E} t_n^{C_n} - s\sigma_n^{-1} \mu_n &= \dfrac{1}{2}(t_n-1) (\log n) - s\sigma_n^{-1} \mu_n +o(1) \\
& = \dfrac{1}{2}\log n \left( t_n-1- \dfrac{s}{\sigma_n}\right) + o(1) \\
& = -\dfrac{1}{2}(\log n) \left( \log t_n +1 -t_n \right) + o(1) \\
&= -\dfrac{1}{2}(\log n) \left( -\dfrac{1}{2}(1-t_n)^2\right)(1+o(1))+ o(1) \\
& = \dfrac{1}{4}(\log n)(1-\exp(s/\sigma_n))^2(1+o(1)) +o(1) \\
& = \dfrac{1}{4}(\log n)s^2\sigma_n^{-2}(1+o(1)) +o(1) \to s^2/4.
\end{align*}
This proves the theorem.
\end{proof}

\section{Computation Using Symmetric Group}
\label{symm}

In this section we shall discuss a connection between symmetric group pf order $n$, denoted by $\mathscr{S}_n$, and the set of all $2$-regular bipartite graphs on $2n$ vertices, denoted by $\mathcal{M}(n,2)$. In particular, we shall derive an importance sampling scheme which helps us to estimate averages of statistics of the uniform random elements from $\mathcal{M}(n,2)$ based on a random sample of uniform permutations of order $n$. Moreover, we shall show how some aspects of component spectrum of uniform element from $\mathcal{M}(n,2)$ can be derived using only the results for cycle structure of random permutations.

With the above mentioned goal in mind, we define $\mathcal{M}^*(n,2)$ to be the set of all edge-coloured (with Blue and Red)  $2$-regular bipartite graphs on $2n$ vertices, i.e., each vertex has exactly one blue and one red edge adjacent to it. The following easy lemma counts the cardinality of $\mathcal{M}^*(n,2)$.
\begin{lemma}{\label{card}}
$\operatorname{card}\left( \mathcal{M}^*(n,2)\right) = (n!)^2, \; \forall \; n \geq 1.$
\end{lemma}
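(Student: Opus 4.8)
The plan is to set up an explicit bijection between $\mathcal{M}^*(n,2)$ and the Cartesian product $\mathscr{S}_n \times \mathscr{S}_n$; the identity $\operatorname{card}\left(\mathcal{M}^*(n,2)\right) = (n!)^2$ then follows at once. First I would observe that in any $G \in \mathcal{M}^*(n,2)$ the sub(multi)graph $G_{\mathrm{blue}}$ consisting of the blue edges has the property that every one of the $2n$ vertices is incident to exactly one blue edge; since $G$ is bipartite with parts $R = \left\{R_1,\ldots,R_n\right\}$ and $C = \left\{C_1,\ldots,C_n\right\}$, this forces $G_{\mathrm{blue}}$ to be a perfect matching between $R$ and $C$, and likewise $G_{\mathrm{red}}$. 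Such a perfect matching is precisely the graph of a bijection $[n] \to [n]$: to $\sigma \in \mathscr{S}_n$ associate the matching $\left\{R_i C_{\sigma(i)} : i \in [n]\right\}$, and this assignment is a bijection between $\mathscr{S}_n$ and the set of perfect matchings of the complete bipartite graph on $(R,C)$, so there are exactly $n!$ of them.

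Next I would check that the map $G \mapsto (\sigma_{\mathrm{blue}}, \sigma_{\mathrm{red}})$, where $\sigma_{\mathrm{blue}}$ (resp.\ $\sigma_{\mathrm{red}}$) is the permutation whose matching equals $G_{\mathrm{blue}}$ (resp.\ $G_{\mathrm{red}}$), is a bijection onto $\mathscr{S}_n \times \mathscr{S}_n$. Injectivity is immediate, since an edge-coloured multigraph is determined by the coloured list of its edges. For surjectivity, given $(\sigma,\tau) \in \mathscr{S}_n \times \mathscr{S}_n$ one reconstructs $G$ by placing a blue edge between $R_i$ and $C_{\sigma(i)}$ and a red edge between $R_i$ and $C_{\tau(i)}$ for each $i \in [n]$; the resulting $G$ is $2$-regular, bipartite, and has exactly one blue and one red edge at every vertex, hence lies in $\mathcal{M}^*(n,2)$ and is sent back to $(\sigma,\tau)$. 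Composing with the bijection of the previous paragraph in each coordinate gives $\operatorname{card}\left(\mathcal{M}^*(n,2)\right) = \operatorname{card}(\mathscr{S}_n \times \mathscr{S}_n) = (n!)^2$.

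The only point that deserves a word of care is the bookkeeping around parallel edges: one must work with bipartite multigraphs so that the case $\sigma(i) = \tau(i)$ — which produces a blue and a red parallel edge between $R_i$ and $C_{\sigma(i)}$, equivalently the entry $a_{i,\sigma(i)} = 2$ in the associated matrix — is permitted, and one must keep the two colour classes as an \emph{ordered} pair, since interchanging the roles of blue and red generally yields a different element of $\mathcal{M}^*(n,2)$. Once these conventions are pinned down the argument is entirely routine; there is no real obstacle beyond making the multigraph model precise.
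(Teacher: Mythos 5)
Your proof is correct and uses the same bijection $G \mapsto (\pi_{\texttt{blue}}^G, \pi_{\texttt{red}}^G)$ from $\mathcal{M}^*(n,2)$ to $\mathscr{S}_n \times \mathscr{S}_n$ that the paper uses; you simply spell out the injectivity/surjectivity check and the multigraph bookkeeping that the paper leaves implicit.
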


\begin{proof}
Recall the notational convention where we label the two bi-partitions of a regular bipartite graph with $2n$ vertices as $\left\{R_1, \ldots, R_n\right\}$ and $\left\{C_1, \ldots, C_n\right\}$. Fix $G \in \mathcal{M}^*(n,2)$; let $\pi_{\texttt{blue}}^G(i)=j$ if there is a blue edge connecting $R_i$ and $C_j$ in $G$ and $\pi_{\texttt{red}}^G(i)=j$ if there is a red edge connecting $R_i$ to $C_j$ in $G$. It is easy to see that the map $G \mapsto (\pi_{\texttt{blue}}^G,\pi_{\texttt{red}}^G)$ is a bijection from $\mathcal{M}^*(n,2)$ to $\mathscr{S}_n \times \mathscr{S}_n$ and hence the lemma follows.
\end{proof}

The cycle structures of the two permutations defined in the proof of \Cref{card}, $\pi_{\texttt{blue}}^G,\pi_{\texttt{red}}^G$, also capture the component structure of the graph $G$. Note that  the vertices $R_i$ and $R_j$ are connected in the graph $G$, i.e., they are in the same component, if and only if there exists $k \in [n]$, distinct $u_0=i,u_1, \ldots, u_{k}=j \in [n]$ and distinct $v_1, \ldots, v_k \in [n]$ such that there exists an edge between $R_{u_l}$ and $C_{v_{l+1}}$ for all $l=0, \ldots,k-1$ and an edge between $C_{v_l}$ and $R_{v_l}$ for all $l=1, \ldots,k$. This happens if and only if one of the following two situations occur : either $\pi_{\texttt{blue}}^G(u_l)=v_{l+1}$ for $l=0,\ldots,k-1$ and $\pi_{\texttt{red}}^G(u_l)=v_{l}$ for $l=1,\ldots,k$, or $\pi_{\texttt{red}}^G(u_l)=v_{l+1}$ for $l=0,\ldots,k-1$ and $\pi_{\texttt{blue}}^G(u_l)=v_{l}$ for $l=1,\ldots,k$. This is equivalent to saying that $i$ and $j$ are in the same cycle of the permutation $\pi_{\texttt{red}}^G \circ (\pi_{\texttt{blue}}^G)^{-1}$. Therefore, the graph $G$ has exactly $a_k$ many components of size $k$ if and only if the permutation $\pi_{\texttt{red}}^G \circ (\pi_{\texttt{blue}}^G)^{-1}$ has exactly $a_k$ many $k$-cycles. The following lemma encapsulates this discussion in terms of distribution of component spectrum of an uniform random element from $\mathcal{M}^*(n,2)$.

\begin{lemma}{\label{card3}}
Let $\mathcal{G} \sim \text{Uniform} \left(\mathcal{M}^*(n,2) \right)$. Then $\pi_{\texttt{red}}^\mathcal{G}, \pi_{\texttt{blue}}^\mathcal{G} \stackrel{ind}{\sim} \text{Uniform}(\mathscr{P}_n)$ and hence $\pi_{\texttt{red}}^\mathcal{G} \circ (\pi_{\texttt{blue}}^\mathcal{G})^{-1} \sim  \text{Uniform}(\mathscr{P}_n)$. In particular, 
$$ \left( \chi_1(\mathcal{G}), \ldots, \chi_n(\mathcal{G})\right) \stackrel{d}{=} \left( c_1(\Pi), \ldots, c_n(\Pi)\right),$$
where $\Pi \sim \text{Uniform}(\mathscr{P}_n)$.
\end{lemma}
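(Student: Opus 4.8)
The plan is to assemble three ingredients, each of which is essentially already in place from the preceding material. First I would push the uniform law through the bijection of \Cref{card}: since $G \mapsto (\pi_{\texttt{blue}}^G,\pi_{\texttt{red}}^G)$ is a bijection from $\mathcal{M}^*(n,2)$ onto $\mathscr{S}_n \times \mathscr{S}_n$, the image of $\mathcal{G} \sim \text{Uniform}(\mathcal{M}^*(n,2))$ is $\text{Uniform}(\mathscr{S}_n \times \mathscr{S}_n)$. The product structure of that law is precisely the statement that $\pi_{\texttt{blue}}^\mathcal{G}$ and $\pi_{\texttt{red}}^\mathcal{G}$ are independent, each marginally $\text{Uniform}(\mathscr{S}_n)$.

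Second, I would show that $\Pi := \pi_{\texttt{red}}^\mathcal{G} \circ (\pi_{\texttt{blue}}^\mathcal{G})^{-1}$ is again $\text{Uniform}(\mathscr{S}_n)$. This is the standard translation-invariance of the uniform measure on a finite group: for each fixed $\sigma \in \mathscr{S}_n$ the map $\tau \mapsto \tau \circ \sigma^{-1}$ is a bijection of $\mathscr{S}_n$, so conditionally on $\{\pi_{\texttt{blue}}^\mathcal{G} = \sigma\}$ the permutation $\pi_{\texttt{red}}^\mathcal{G} \circ \sigma^{-1}$ is uniform (because $\pi_{\texttt{red}}^\mathcal{G}$ is uniform and independent of $\pi_{\texttt{blue}}^\mathcal{G}$); since this conditional law does not depend on $\sigma$, it is also the unconditional law of $\Pi$.

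Third, I would invoke the combinatorial identification spelled out in the paragraph preceding the lemma: two row-vertices $R_i,R_j$ lie in the same connected component of $G$ if and only if $i$ and $j$ lie in the same cycle of $\pi_{\texttt{red}}^G \circ (\pi_{\texttt{blue}}^G)^{-1}$, and a component using $k$ row-vertices (hence $2k$ vertices in total, since a $2$-regular bipartite graph is a disjoint union of even cycles) corresponds to exactly one $k$-cycle. Thus $\chi_k(G) = c_k\bigl(\pi_{\texttt{red}}^G \circ (\pi_{\texttt{blue}}^G)^{-1}\bigr)$ as a deterministic identity for every $G \in \mathcal{M}^*(n,2)$ and every $k$. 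Applying it to $\mathcal{G}$ and combining with the second step yields $\bigl(\chi_1(\mathcal{G}),\ldots,\chi_n(\mathcal{G})\bigr) \stackrel{d}{=} \bigl(c_1(\Pi),\ldots,c_n(\Pi)\bigr)$ with $\Pi \sim \text{Uniform}(\mathscr{S}_n)$, which is the assertion.

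The only mildly delicate point is the bookkeeping in the third step — verifying that a connected component on a row-set $S$ forces a column-set of the same size, and that the alternating red/blue walk traversing it corresponds to a single cycle of the composed permutation with no double counting — but this is exactly what the discussion just before the lemma establishes, so here the proof reduces to citing it. Everything else is an immediate consequence of \Cref{card} together with the elementary left-translation invariance of Haar measure on $\mathscr{S}_n$.
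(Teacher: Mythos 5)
Your proposal is correct and takes essentially the same approach as the paper: the paper's proof is the one-line observation that the claim is immediate from the bijection $G \mapsto (\pi_{\texttt{blue}}^G,\pi_{\texttt{red}}^G)$ of \Cref{card}, together with the component-to-cycle correspondence established in the paragraph preceding the lemma, which is exactly what you spell out. Your three steps (pushforward of the uniform law through the bijection, translation invariance on $\mathscr{S}_n$, and the deterministic identity $\chi_k(G)=c_k(\pi_{\texttt{red}}^G\circ(\pi_{\texttt{blue}}^G)^{-1})$) are the details the paper leaves implicit.
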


\begin{proof}
The proof is trivial from the fact that the map $G \mapsto (\pi_{\texttt{blue}}^G,\pi_{\texttt{red}}^G)$ is a bijection.
\end{proof}

We now want to return to the discussion of $\mathcal{M}(n,2)$. Any graph $G$ of $\mathcal{M}^*(n,2)$ maps to, after deleting the edge-colouring information,  an element $\psi(G)$ of $\mathcal{M}(n,2)$. Unfortunately, this map $\psi$ is not injective; otherwise we could have trivially reduced the discussion of the component spectrum of typical element in $\mathcal{M}(n,2)$ to the discussion of cycle structure of typical permutations. The following short lemma gives us an idea about the degree of non-injectivity of the map $\psi$.

\begin{lemma}{\label{card2}}
For any $A \in \mathcal{M}(n,2)$, we have $\operatorname{card}\left( \psi^{-1}\left(\left\{A\right\} \right)\right) = 2^{\sum_{i=2}^n \chi_i(A)}. $
\end{lemma}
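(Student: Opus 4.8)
The plan is to count, for a fixed $A \in \mathcal{M}(n,2)$, the number of ways to two-colour its edges (blue/red) so that each vertex receives exactly one blue and one red edge, since $\psi^{-1}(\{A\})$ is precisely the set of such proper colourings. The key observation is that this count factorizes over the irreducible components of $A$: a valid colouring is chosen independently on each connected component of the bipartite graph $G_A$, so $\operatorname{card}(\psi^{-1}(\{A\}))$ equals the product over all irreducible components $(S,T)$ of the number of valid blue/red colourings of the $2$-regular connected bipartite graph $A(S,T)$. Thus it suffices to show that a connected $2$-regular bipartite graph on $2k$ vertices admits exactly $1$ valid colouring if $k=1$ and exactly $2$ valid colourings if $k \geq 2$.

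For the component analysis, I would recall that a connected $2$-regular bipartite graph on $2k$ vertices is either a single doubled edge (when $k=1$, i.e.\ $A(S,T)$ is the $1\times 1$ matrix with entry $2$) or a single cycle of length $2k$ (when $k\geq 2$); this is exactly the structure underlying the formula $f(k,2) = k!(k-1)!/2$ cited from \cite[Proposition 5.5.10]{stanenu}. In the doubled-edge case, the two parallel edges must get different colours, and since the edges are unlabelled (they are just ``two edges between $R_i$ and $C_j$''), there is only $1$ colouring. In the $2k$-cycle case with $k \geq 2$, a proper blue/red colouring where each vertex sees one of each colour is exactly a proper $2$-edge-colouring of an even cycle, of which there are exactly $2$ (the two alternating patterns): once the colour of one edge is fixed, the alternating constraint propagates uniquely around the cycle, and both choices are consistent because the cycle has even length.

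Combining, $\operatorname{card}(\psi^{-1}(\{A\})) = \prod_{k\geq 1}(\text{contribution})^{\chi_k(A)} = 1^{\chi_1(A)} \cdot \prod_{k \geq 2} 2^{\chi_k(A)} = 2^{\sum_{i=2}^n \chi_i(A)}$, as claimed. The one point requiring a little care — and the main (mild) obstacle — is being precise about what ``edge-coloured $2$-regular bipartite graph'' means when there are multi-edges: in a $1$-component the two edges joining $R_i$ and $C_j$ are indistinguishable, so swapping their colours does not produce a new element of $\mathcal{M}^*(n,2)$, which is why a $1$-component contributes a factor of $1$ rather than $2$. I would state this convention explicitly (consistent with how $\pi_{\texttt{blue}}^G, \pi_{\texttt{red}}^G$ were defined in \Cref{card}, where a doubled edge forces $\pi_{\texttt{blue}}^G(i) = \pi_{\texttt{red}}^G(i) = j$) and then the factorization argument goes through cleanly.
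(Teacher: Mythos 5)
Your proof is correct and takes essentially the same approach as the paper: decompose the colouring problem over irreducible components, observe that a connected $2$-regular bipartite graph admits at most two valid colourings (fixing one edge's colour propagates around the cycle), and that for a $1$-component (the doubled edge) the two choices are identified. Your write-up is more explicit than the paper's — in particular you spell out the doubled-edge versus $2k$-cycle dichotomy and flag the multi-edge convention under which swapping the two colours on a doubled edge does not give a new element of $\mathcal{M}^*(n,2)$ — but these are expansions of the same argument, not a different route.
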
 

\begin{proof}
We start with $A \in \mathcal{M}(n,2)$ and try to assign edge-colours to $A$. Consider any irreducible $r$-component $B_r$ of $A$, i.e., component with $2r$ vertices. It is easy to see that once we assign colour to one of the edges in $B$, colouring of the other edges in $B$ are determined; hence there are at most $2$ ways to colour any connected $r$-component. For $r=1$, these two colourings are equivalent whereas it is not the case for $r \geq 2$. This proves the lemma.
\end{proof}

Now let $\mathbb{Q}_{\texttt{uniform},n}$ denotes the uniform probability measure on $\mathcal{M}(n,2)$ while  $\mathbb{Q}_{\texttt{col},n}$ denotes the probability measure induced on $\mathcal{M}(n,2)$ by $\psi(\mathcal{G}_n)$ where $\mathcal{G}_n \sim \text{Uniform} \left(\mathcal{M}^*(n,2) \right)$. For any $A \in \mathcal{M}(n,2)$, we have
$$ \mathbb{Q}_{\texttt{col},n}(\left\{A\right\}) = \mathbb{P}\left(\psi(\mathcal{G}_n)=A \right) = \dfrac{\operatorname{card}\left( \psi^{-1}\left(\left\{A\right\} \right)\right)}{\operatorname{card}\left( \mathcal{M}^*(n,2)\right)} = \dfrac{1}{n!^2}2^{C(A)-\chi_1(A)}, $$
and hence
\begin{equation}{\label{simul}}
\dfrac{d\mathbb{Q}_{\texttt{uniform},n}}{d\mathbb{Q}_{\texttt{col},n}} (A) = \dfrac{2^{-C(A)+\chi_1(A)}}{H(n,2)/(n!^2)}, \; \forall \; A \in \mathcal{M}(n,2).
\end{equation}
The formula in \Cref{simul} gives us an importance sampling method to estimate $\mathbb{E}h(\mathcal{A}_n)$ for any $h :\mathcal{M}(n,2) \to \mathbb{R}$ and  $\mathcal{A}_n \sim \text{Uniform}(\mathcal{M}(n,2)$, as described in the following algorithm.

\RestyleAlgo{boxruled}
\LinesNumbered
\begin{algorithm}[ht]
  \caption{Importance Sampling for $\text{Uniform}(\mathcal{M}(n,2))$\label{alg}}
  Fix $N$ large. \\
  Generate $\Pi_{1,1}, \Pi_{2,1}, \ldots, \Pi_{1,N},\Pi_{2,N} \stackrel{ind}{\sim} \text{Uniform}(\mathscr{P}_n).$ \\
  Set $\mathcal{G}_i = \varphi^{-1}(\Pi_{1,i},\Pi_{2,i})$, for all $i \in [N]$, where $\varphi$ is the bijection $ G \mapsto (\pi_{\texttt{blue}}^G,\pi_{\texttt{red}}^G)$. Clearly, $\psi(\mathcal{G}_i ) \stackrel{ind}{\sim} \mathbb{Q}_{\texttt{col},n}.$ \\
  Estimate $\mathbb{E}h(\mathcal{A}_n)$ by 
  $$ \dfrac{1}{N}\dfrac{(n!)^2}{H(n,2)} \sum_{i=1}^N h(\psi(\mathcal{G}_i)) 2^{-C(\mathcal{G}_i)+\chi_1(\mathcal{G}_i)}.$$
\end{algorithm}
In the special case where the function $h$ depends on its argument only through its component structure, i.e., $\exists \; h^* : \mathbb{Z}_{\geq 0}^n \to \mathbb{R}$ such that $h(A) = h^*(\chi_1(A), \ldots, \chi_n(A))$, then we combine \Cref{card3} and \Cref{simul} to conclude the following result.

\begin{result}{\label{resimport}}
Let $\mathcal{A}_n \sim \text{Uniform}(\mathcal{M}(n,2))$ and $h$ be a function as described above. Then 
$$ \mathbb{E} h(\mathcal{A}_n) = \dfrac{n!^2}{H(n,2)} \mathbb{E}h^*(c_1(\Pi), \ldots, c_n(\Pi)) 2^{-C(\Pi)+c_1(\Pi)},$$
where $\Pi \sim \text{Uniform}(\mathscr{P}_n)$. In particular, plugging in $h \equiv 1$, one can conclude that 
$$ \mathbb{E} 2^{-C(\Pi)+c_1(\Pi)} = \dfrac{H(n,2)}{n!^2},$$
and hence
$$\mathbb{E} h(\mathcal{A}_n) = \dfrac{ \mathbb{E}h^*(c_1(\Pi), \ldots, c_n(\Pi)) 2^{-C(\Pi)+c_1(\Pi)}}{ \mathbb{E} 2^{-C(\Pi)+c_1(\Pi)}}.$$
\end{result} 
 
\Cref{resimport} changes the problem of studying component spectrum of an uniform element from $\mathcal{M}(n,2)$ to the analysis of uniform random permutation, a topic which is very well-understood and explored. As an warm-up, let us prove the fact that $H(n,2) \sim \sqrt{e/\pi} n!^2 n^{-1/2}$ using \Cref{resimport}. Set $\beta_n := H(n,2)/(n!)^2 \in [0,1]$ for all $n \geq 1$ and note that 
$ \beta_n = \mathbb{E} 2^{-C(\Pi_n)+c_1(\Pi_n)},$
where $\Pi_n \sim \text{Uniform}(\mathscr{P}_n)$. Applying the notations introduced in the  first assertion of \Cref{cycleindex3}, one can conclude that,
\begin{align*}
(1-x)\sum_{n \geq 0} x^n \beta_n = (1-x) \sum_{n \geq 0} x^n \mathbb{E} 2^{-C(\Pi_n)+c_1(\Pi_n)} =  \mathbb{E} 2^{-C(\Pi_x)+c_1(\Pi_x)} 
& = \prod_{i \geq 2} \mathbb{E} 2^{-\text{Poisson}(x^i/i)} \\
& = \exp \left( -\dfrac{1}{2}\sum_{i \geq 2} \dfrac{x^i}{i}\right) = \exp(x/2)\sqrt{1-x},
\end{align*} 
for all $x \in (0,1)$. Further computation shows that
\begin{align*}
\sum_{n \geq 0} x^n \beta_n = \dfrac{e^{x/2}}{\sqrt{1-x}} \Rightarrow \sum_{n \geq 0} nx^{n-1}\beta_n = \dfrac{1}{2}e^{x/2}(1-x)^{-1/2} + \dfrac{1}{2}e^{x/2}(1-x)^{-3/2} = \dfrac{1}{2}e^{x/2}(2-x)(1-x)^{-3/2},
\end{align*}
and hence
$$ \sum_{n \geq 0} x^n\left( n\beta_n - (n-1)\beta_{n-1}\right) = (1-x)\sum_{n \geq 0} nx^n \beta_n = \dfrac{1}{2}e^{x/2}x(2-x)(1-x)^{-1/2}, \; \forall \; x \in (0,1),$$
with the convention that $\beta_{-1}:=0$. Since the above power series converges on $(-1,1)$, the above expression is true for any $|x|<1$ and 
$$ \lim_{x \uparrow 1} (1-x)^{1/2}\sum_{n \geq 0} x^n\left( n\beta_n - (n-1)\beta_{n-1}\right) = \sqrt{e}/2.$$
The situation is now ripe for an application of a version of  \textit{Hardy-Littlewood Tauberian Theorem}, which can be found in \cite[Theorem 7.4]{korevaar}. Applying this theorem we can conclude that, provided there exists $C < \infty$ such that $n(n\beta_n-(n-1)\beta_{n-1}) \leq C\sqrt{n}$ for all $n \geq 1$, we have   
$$ n^{-1/2} \sum_{k =0}^n \left( k\beta_k - (k-1)\beta_{k-1}\right) \longrightarrow \dfrac{\sqrt{e}/2}{\Gamma(3/2)}.$$
The last convergence implies that $\sqrt{n}\beta_n \longrightarrow \sqrt{e/\pi}$, which completes our argument.

To prove the sufficient condition required to apply the Tauberian theorem, we recall the map $\omega_n : \mathscr{P}_n \to \mathscr{P}_{n-1}$ where $\omega_n(\pi)$ is the permutation obtained from $\pi \in \mathscr{P}_n$ after deleting $n$ from its cycle representation. For example, if $n=6$ and $\pi = (1\, 4)(2\, 6\, 5)(3)$ in its cycle representation, then $\omega_6(\pi)=(1\, 4)(2\, 5)(3)$. It is an well-known fact that $\Pi \sim \text{Uniform}(\mathscr{P}_n)$ implies that $\omega_n(\Pi) \sim \text{Uniform}(\mathscr{P}_{n-1})$. Therefore, we have
$$ n\beta_n - (n-1)\beta_{n-1} = n \mathbb{E} 2^{-C(\Pi_n)+c_1(\Pi_n)} -(n-1)\mathbb{E} 2^{-C(\omega_n(\Pi_n))+c_1(\omega_n(\Pi_n))}.$$
Introducing the notation $c_{-1}(\pi) := C(\pi)-c_1(\pi)$ to denote the number of non-fixed points of the permutation $\pi$, we observe that if  $n$ belongs to an $R_n$-cycle of $\Pi_n$, then 
$c_{-1}(\Pi_n) = c_{-1}(\omega_n(\Pi_n)) + \mathbbm{1}(R_n=2)$. Moreover, for any $r \in [n]$ and $\pi \in \mathscr{P}_{n-1}$,
\begin{align*}
\mathbb{P} \left( \omega_n(\Pi_n) = \pi, R_n =r\right) &= \dfrac{1}{n!}\operatorname{card}\left( \left\{\pi^* \in \mathscr{S}_n : \omega_n(\pi^*)=\pi, \; n \text{ is in an $r$-cycle of } \pi^* \right\}\right) \\
& = \dfrac{(r-1)c_{r-1}(\pi)}{n!}\mathbbm{1}(r >1) + \dfrac{1}{n!}\mathbbm{1}(r=1), 
\end{align*}
and therefore, $\mathbb{P}(R_n=2 \mid \omega_n(\Pi_n)) = c_1(\omega_n(\Pi_n))/n.$ Therefore,
\begin{align*}
n\beta_n - (n-1)\beta_{n-1} &= n \mathbb{E}\left(2^{-c_{-1}(\Pi_n)} - 2^{-c_{-1}(\omega_n(\Pi_n))} \right) + \mathbb{E} 2^{-c_{-1}(\omega_n(\Pi_n))} \\
& = - n \mathbb{E} 2^{-c_{-1}(\omega_n(\Pi_n))-1} \mathbbm{1}\left( R_n=2\right) + \mathbb{E} 2^{-c_{-1}(\omega_n(\Pi_n))} = \mathbb{E}2^{-c_{-1}(\omega_n(\Pi_n))}\left( 1- \dfrac{1}{2}c_1(\omega_n(\Pi_n))\right).
\end{align*}
Thus it is enough to show that 
$$ \sup_{n \geq 1} \sqrt{n}\mathbb{E}2^{-c_{-1}(\Pi_n)}\left( 1- \dfrac{1}{2}c_1(\Pi_n)\right) < \infty. $$
In order to prove the above, we plug-in $t_1=t$ and $t_k=1/2$ for all $k \geq 2$ in \Cref{cycleindex}.
$$ \sum_{n \geq 0} x^n \mathbb{E} t^{c_1(\Pi_n)}2^{-c_{-1}(\Pi_n)} = \exp \left(tx + \sum_{i \geq 2} x^{i}/(2i) \right) = e^{(t-1/2)x}(1-x)^{-1/2}, \; \forall \; |x| < 1.$$
Differentiating with respect to $t$, we obtain
\begin{equation}{\label{diff}}
 \sum_{n \geq 0} x^n \mathbb{E} c_1(\Pi_n)t^{c_1(\Pi_n)-1}2^{-c_{-1}(\Pi_n)}  = xe^{(t-1/2)x}(1-x)^{-1/2}.
\end{equation}
Think of the above differentiation as differentiation of co-efficients of formal power series as we have done multiple times earlier. Since we shall only compare co-efficients, this will be enough justification for us. Plug-in $t=1$ in the abobe two equations to obtain the following.
$$ \sum_{n \geq 0} x^n \mathbb{E}2^{-c_{-1}(\Pi_n)}\left( 1- \dfrac{1}{2}c_1(\Pi_n)\right) = e^{x/2}(1-x/2)(1-x)^{-1/2} = \left( 1- \sum_{k \geq 1} \dfrac{k-1}{2^k k!}x^k\right)\left( \sum_{k \geq 0} 2^{-2k}{2k \choose k} x^k \right).$$ 
Comparing co-efficients we obtain,
\begin{align*}
\mathbb{E}2^{-c_{-1}(\Pi_n)}\left( 1- \dfrac{1}{2}c_1(\Pi_n)\right)  = 2^{-2n}{2n \choose n} - \sum_{k =1}^n \dfrac{(k-1)2^{2(n-k)}}{2^kk!}{2(n-k) \choose (n-k)} \leq 2^{-2n}{2n \choose n} \sim \dfrac{1}{\sqrt{\pi n}},
\end{align*}
as $n \to \infty$, by Stirling's approximation. This completes the argument.

We now present a concrete example of application of \Cref{resimport} where we prove the asymptotic behaviour of the largest component size $L(\mathcal{A}_n) := \max \left\{j \geq 1 : \chi_j(\mathcal{A}_n) >0\right\}$ where $\mathcal{A}_n \sim \text{Uniform}(\mathcal{M}(n,2))$.

\begin{theorem}
Let $\mathcal{A}_n \sim \text{Uniform}(\mathcal{M}(n,2))$ and $L(\mathcal{A}_n)$ is the size of the largest irreducible component of $\mathcal{A}_n$. Then $L(\mathcal{A}_n)/n$ converges weakly to the probability measure $\mathcal{L}$ on $(0,1)$ characterized by its moments given as
$$ \int_{(0,1)} u^m \, d\mathcal{L}(u) := \dfrac{\sqrt{e}}{2\Gamma(m+1/2)} \int_{0}^{\infty} \exp \left(-\dfrac{1}{2}\int_{u}^{\infty} \dfrac{e^{-v}}{v}\, dv -u \right) u^{m-1}\, du, \; \forall \; m \geq 1.$$
\end{theorem}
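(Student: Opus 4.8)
The plan is to transport the problem to uniformly random permutations via \Cref{resimport}, reduce the moments of $L(\mathcal A_n)/n$ to a coefficient extraction, and run a Dickman-type singularity analysis in which the exponential integral appears on its own. Since $L(A)=\max\{j\ge 1:\chi_j(A)>0\}$ depends on $A$ only through its component spectrum, \Cref{resimport} applied with $h^{*}(a_1,\dots,a_n)=(\max\{j:a_j>0\}/n)^m$ gives, for $\Pi_n$ uniform on the symmetric group of order $n$, $L(\Pi_n)$ its longest cycle, and $c_{-1}(\Pi_n)=C(\Pi_n)-c_1(\Pi_n)$,
\[
\mathbb E\big[(L(\mathcal A_n)/n)^m\big]=\frac{(n!)^2}{H(n,2)}\,\mathbb E\big[(L(\Pi_n)/n)^m\,2^{-c_{-1}(\Pi_n)}\big].
\]
Substituting $t_1=1$, $t_i=\tfrac12$ for $2\le i\le \ell$, and $t_i=0$ for $i>\ell$ into the cycle index theorem \Cref{cycleindex} shows that, for $1\le\ell\le n$,
\[
b^{(n)}_\ell:=\mathbb E\big[2^{-c_{-1}(\Pi_n)}\mathbbm 1(L(\Pi_n)\le\ell)\big]=\textsc{Coeff}\big(\exp(x+\tfrac12\textstyle\sum_{i=2}^{\ell}x^i/i);n\big),
\]
and in particular $b^{(n)}_n=\beta_n:=H(n,2)/(n!)^2$, since the omitted tail $\tfrac12\sum_{i>n}x^i/i$ does not affect the $n$-th coefficient (by \Cref{cycleindexm3}). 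Using the identity $L^m=n^m-\sum_{\ell=1}^{n-1}\big((\ell+1)^m-\ell^m\big)\mathbbm 1(L\le\ell)$, valid since $L\le n$, and taking $2^{-c_{-1}}$-weighted expectations,
\[
\mathbb E\big[(L(\Pi_n)/n)^m\,2^{-c_{-1}(\Pi_n)}\big]=\beta_n-\frac1{n^m}\sum_{\ell=1}^{n-1}\big((\ell+1)^m-\ell^m\big)\,b^{(n)}_\ell .
\]

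The analytic heart is the Dickman-type estimate: for each fixed $u\in(0,1)$, $\sqrt n\,b^{(n)}_{\lfloor un\rfloor}\to\sqrt{e/\pi}\,g(u)$ as $n\to\infty$, where $g$ is the limiting distribution function of $L/n$ under the $2^{-c_{-1}}$-weighting, so $g$ is nondecreasing with $g(0^{+})=0$ and $g(1^{-})=1$. Inside the unit disc one has $\exp(x+\tfrac12\sum_{i=2}^{\ell}x^i/i)=e^{x/2}(1-x)^{-1/2}\exp(-\tfrac12\sum_{i>\ell}x^i/i)$, while outside the disc the polynomial-exponent form exhibits the generating function as entire with a saddle point at $x_{*}=1+c(u)/n$; a transfer-theorem/saddle-point argument in either description gives the limit. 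The exponential integral enters because, in the scaling $x=1-s/n$ relevant to the $n$-th coefficient, the large-component-removal factor obeys $\sum_{i>\lfloor un\rfloor}x^i/i\to\int_{us}^{\infty}e^{-v}/v\,dv$ once the singular part $-\log(1-x)$ has recombined with $(1-x)^{-1/2}$; concretely $g$ admits the contour representation
\[
g(u)=\sqrt\pi\,e^{\gamma/2}\sqrt u\cdot\frac1{2\pi i}\int_{(c)}e^{-w}\exp\big(\tfrac12\operatorname{Ein}(-uw)\big)\,dw,\qquad \operatorname{Ein}(z)=\int_0^z\frac{1-e^{-t}}t\,dt=\gamma+\log z+\int_z^{\infty}\frac{e^{-t}}t\,dt,
\]
the integral taken (as an oscillatory, conditionally convergent integral) over a vertical line $\Re w=c>0$. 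As a cross-check, since $i\,\mathbb E Z_i\to\tfrac12$ the weighted ensemble is a logarithmic structure with parameter $\tfrac12$, so by the Arratia--Barbour--Tavar\'e theory $L/n$ converges to the largest coordinate of a $\mathrm{PD}(\tfrac12)$ sequence, which identifies $g$ abstractly.

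With $g$ in hand the rest is mechanical. The bound $b^{(n)}_\ell\le\beta_n$ together with $\sqrt n\,\beta_n\to\sqrt{e/\pi}$ and $u^{m-1}\le1$ on $(0,1)$ supplies a uniform dominating constant, so the Riemann sum converges, $\frac1{n^m}\sum_{\ell=1}^{n-1}((\ell+1)^m-\ell^m)b^{(n)}_\ell\sim\sqrt{e/\pi}\,n^{-1/2}\,m\int_0^1 u^{m-1}g(u)\,du$. Combined with $(n!)^2/H(n,2)\sim\sqrt{\pi n/e}$ this yields $\mathbb E[(L(\mathcal A_n)/n)^m]\to 1-m\int_0^1 u^{m-1}g(u)\,du=\int_{(0,1)}u^m\,dg(u)$ after integration by parts using $g(0)=0$, $g(1)=1$. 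To identify $\int_{(0,1)}u^m\,dg(u)$ with the asserted integral I would insert the contour representation of $g$, differentiate in $u$ under the integral, exchange the $u$- and $w$-integrations, change variables $v=-uw$, and invoke the Hankel formula $\frac1{2\pi i}\int_{(c)}e^{s}s^{-m-1/2}\,ds=1/\Gamma(m+1/2)$, arriving at
\[
\int_{(0,1)}u^m\,dg(u)=\frac{\sqrt e}{2\,\Gamma(m+1/2)}\int_0^{\infty}\exp\big(-\tfrac12\textstyle\int_u^{\infty}\frac{e^{-v}}v\,dv-u\big)u^{m-1}\,du,\qquad m\ge1,
\]
the factor $e^{-u}$ and the constant $\sqrt e$ emerging from the change of variables together with the factor $e^{x/2}$. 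Finally, $L(\mathcal A_n)/n$ is $[0,1]$-valued, so its laws are tight; moment convergence for all $m\ge1$ (and the trivial $m=0$) plus determinacy of the Hausdorff moment problem identify the weak limit as the unique probability measure with these moments, which is supported on $(0,1)$ since $g(0^{+})=0$ and $g(1^{-})=1$ preclude atoms at the endpoints.

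The main obstacle is the Dickman-type asymptotics $\sqrt n\,b^{(n)}_{\lfloor un\rfloor}\to\sqrt{e/\pi}\,g(u)$: controlling the saddle-point (equivalently, transfer-theorem) estimate near $x=1$, treating the boundary regimes $u\to0$ and $u\to1$, and tracking the branch structure of the contour integrals carefully enough that the constant $\sqrt e$ and the precise integrand $\exp(-\tfrac12\int_u^{\infty}e^{-v}/v\,dv-u)$ come out exactly. Everything downstream of that estimate is routine.
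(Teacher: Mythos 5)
Your starting point matches the paper's: both proofs feed the function $h^{*}(a_1,\dots,a_n)=(\max\{j:a_j>0\}/n)^m$ through \Cref{resimport} and both prove moment convergence using the $[0,1]$-valued determinacy of the Hausdorff moment problem. After that the routes genuinely diverge, and yours has a gap you acknowledge but do not close.

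The paper's route is: pass to the Poissonized ensemble $\Pi_x$ from \Cref{cycleindex3}(A), compute $\mathbb{E}\,L(\Pi_x)^m 2^{-c_{-1}(\Pi_x)}$ in closed form by an exact substitution built around $f(y)=\int_y^\infty z^{-1}e^{-z}\,dz$ (the exponential-integral tail), take $x\uparrow 1$ to get $(-\log x)^m(1-x)^{-1/2}\mathbb{E}L(\Pi_x)^m 2^{-c_{-1}(\Pi_x)}\to\gamma_m$, and then de-Poissonize with the Hardy--Littlewood Tauberian theorem, whose one-sided condition is verified via the cycle-deletion map $\omega_n$. Every step is effective. Your route instead stays at fixed $n$, defines $b^{(n)}_\ell$ by coefficient extraction (your telescoping identity $L^m=n^m-\sum_{\ell<n}((\ell+1)^m-\ell^m)\mathbbm{1}(L\le\ell)$ and the identification $b^{(n)}_n=\beta_n$ are both correct), and then needs a \emph{quantitative} Dickman-type pointwise asymptotic $\sqrt n\,b^{(n)}_{\lfloor un\rfloor}\to\sqrt{e/\pi}\,g(u)$. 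That is the entire analytic content of the theorem, and in your write-up it is asserted rather than proved: the function $\exp(x+\tfrac12\sum_{i=2}^{\lfloor un\rfloor}x^i/i)$ is a polynomial exponential (entire), so there is no algebraic singularity at $x=1$ to transfer; a genuine saddle-point estimate, uniform over compact $u$-ranges, has to be constructed from scratch, and the scaling $x=1-s/n$ that reproduces $\int_{us}^\infty e^{-v}/v\,dv$ needs error control. You also lean on the Arratia--Barbour--Tavar\'e PD$(1/2)$ theory to ``identify $g$ abstractly,'' but the whole point of \Cref{symm} in the paper is to avoid importing that machinery. Finally, the last step --- inserting a contour representation of $g$, exchanging $u$- and $w$-integration, and invoking the Hankel formula to hit the exact factor $\sqrt e/(2\Gamma(m+1/2))$ and the integrand $\exp(-\tfrac12\int_u^\infty e^{-v}/v\,dv-u)$ --- is sketched in a single sentence without tracking branch cuts or verifying absolute integrability before the interchange. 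By contrast the paper reaches that same expression directly, because the substitution $f(u_j(x))=\lambda_\infty(x)-\lambda_j(x)$ makes $\int_0^\infty e^{-f(v)/2-v}v^{m-1}\,dv$ appear verbatim as the $x\uparrow 1$ limit, with $\Gamma(m+1/2)$ supplied by the Tauberian theorem.

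In short: the reduction and the bookkeeping at the ends of your argument are sound and would, once filled in, give a legitimate alternative proof by fixed-$n$ saddle-point analysis instead of Poissonization plus Tauberian theory. But as written the proposal leaves the two hardest steps --- the uniform coefficient asymptotics for $b^{(n)}_{\lfloor un\rfloor}$ and the contour-integral identification of $g$ with the stated integrand --- unproved, and it substitutes an appeal to ABT theory for the elementary argument the section is designed to give.
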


\begin{proof}
By \Cref{resimport}, we have
$$ \mathbb{E} (L(\mathcal{A}_n)/n)^m = \mathbb{E} \dfrac{n!^2}{H(n,2)} L(\Pi_n)^m 2^{-C(\Pi_n)+c_1(\Pi_n)}n^{-m}, \; \forall \; m \geq 1,$$
where $\Pi_n \sim \text{Uniform}(\mathscr{P}_n)$ and $L(\Pi_n)$ is the size of the largest cycle of $\Pi_n$. 
Our goal is to use method of moments, which is valid since $L(\mathcal{A}_n)/n \in [0,1]$.  Since $H(n,2)\sim \sqrt{e/\pi}n!^2n^{-1/2}$, it is enough to try to compute the limit of $n^{-m+1/2}\mathbb{E}  L(\Pi_n)^m 2^{-c_{-1}(\Pi_n)}$ as $n \to \infty$. Recall the notations of \Cref{cycleindex3}. For any $x \in (0,1)$, we have $c_i(\Pi_x)\stackrel{ind}{\sim} \text{Poisson}(x^i/i)$ and hence the following two expressions are easy to see for any $j \geq 2$. 
\begin{align*}
\mathbb{E}\left( 2^{-c_{-1}(\Pi_x)} \mid L(\Pi_x)=j \right) & = \mathbb{E} \left( 2^{-\sum_{i=1}^j c_i(\Pi_x)} \mid c_k(\Pi_x)=0, \; \forall \; k>j, \; c_j(\Pi_x)>0\right) \\
& = \prod_{i=2}^{j-1} \mathbb{E} 2^{-\text{Poisson}(x^i/i)} \mathbb{E}\left( 2^{-\text{Poisson}(x^j/j)} \mid \text{Poisson}(x^j/j)>0\right) \\
&= \exp\left( -\sum_{i=2}^{j-1}\dfrac{x^i}{2i}\right) \dfrac{\exp(-x^j/(2j))-\exp(-x^j/j)}{1-\exp(-x^j/j)},
\end{align*}
and hence
\begin{align*}
\mathbb{E} L(\Pi_x)^m2^{-c_{-1}(\Pi_x)} \mathbbm{1}(L(\Pi_x)=j) &= j^m \mathbb{P}(L(\Pi_x)=j) \mathbb{E}\left( 2^{-c_{-1}(\Pi_x)} \mid L(\Pi_x)=j \right) \\
& = j^m \mathbb{E} \left( c_k(\Pi_x)=0, \; \forall \; k>j, \; c_j(\Pi_x)>0\right)\mathbb{E}\left( 2^{-c_{-1}(\Pi_x)} \mid L(\Pi_x)=j \right) \\
&= j^m \exp\left( -\sum_{k >j} x^k/k\right)\left( 1- \exp(-x^j/j)\right)\mathbb{E}\left( 2^{-c_{-1}(\Pi_x)} \mid L(\Pi_x)=j \right) \\
&= j^m e^{x/2}\exp\left( -\sum_{k >j} \dfrac{x^k}{k} - \sum_{k =1}^j \dfrac{x^k}{2k}\right)\left( 1- \exp(-x^j/(2j))\right).
\end{align*}
It is easy to directly check that the above expression is true for $j=1$ also and hence
\begin{equation}{\label{express}}
\mathbb{E}  L(\Pi_x)^m2^{-c_{-1}(\Pi_x)} = \sum_{j \geq 1} j^m e^{x/2}\exp\left( -\sum_{k >j} \dfrac{x^k}{k} - \sum_{k =1}^j \dfrac{x^k}{2k}\right)\left( 1- \exp(-x^j/(2j))\right)
\end{equation}
The rest of the proof is similar to the argument applied in \cite{shepp}. We define $\lambda_0(x):=0, \lambda_j(x) := \sum_{k =1}^j x^k/k,$ for $j \geq 1$ and $\lambda_{\infty}(x) := \sum_{k \geq 1} x^k/k = -\log(1-x)$, for all $x \in [0,1).$ Then \Cref{express} can be re-written as 
\begin{align}{\label{express2}}
\mathbb{E}  L(\Pi_x)^m2^{-c_{-1}(\Pi_x)} &= \sum_{j \geq 1} j^m e^{x/2}\exp\left( -\lambda_j(x)/2\right) \exp(\lambda_j(x)-\lambda_{\infty}(x))\left( 1- \exp(-(\lambda_j(x)-\lambda_{j-1}(x))/2)\right) \nonumber \\
&= \dfrac{1}{2}\sum_{j \geq 1} j^m e^{x/2} \exp(-\lambda_{\infty}(x)/2)\int_{\lambda_{j-1}(x)}^{\lambda_j(x)} \exp\left( -\dfrac{1}{2}(\lambda_{\infty}(x)-y)\right)\, dy \nonumber \\
& =  \dfrac{e^{x/2}(1-x)^{1/2}}{2}\sum_{j \geq 1} j^m   \int_{\lambda_{j-1}(x)}^{\lambda_j(x)} \exp\left( -\dfrac{1}{2}(\lambda_{\infty}(x)-y)\right)\, dy.
\end{align}
Introduce the notation $f(y):= \int_{y}^{\infty} z^{-1}e^{-z}\, dz < \infty$, for all $y>0$. Then we have the following inequalities for any $x \in [0,1)$. 
\begin{align*}
\int_{j-1}^j u^{-1}x^u\, du \geq j^{-1}x^j \geq  \int_{j}^{j+1} u^{-1}x^u\, du, \; \forall \; j \geq 1 \Rightarrow \int_{j}^{\infty} u^{-1}x^u\, du \geq \lambda_{\infty}(x)-\lambda_j(x) \geq  \int_{j+1}^{\infty} u^{-1}x^u\, du, \; \forall \; j \geq 0.
\end{align*}
Since
$$ \int_{v}^{\infty} u^{-1}x^u \, du = \int_{v}^{\infty} \dfrac{e^{u \log x}}{u}\, du = \int_{-v \log x}^{\infty} z^{-1}e^{-z}\, dz = f(-v\log x), \; \forall\; v \geq 0, x \in (0,1).$$
Therefore, $f(-j\log x) \geq \lambda_{\infty}(x)-\lambda_j(x) \geq f(-(j+1)\log x)$, for all $j \geq 0$, adopting the convention $f(0) = \infty$, since  $f$ is strictly decreasing and differentiable with $f(y) \uparrow \infty$ as $y \downarrow 0$. Hence there exists a unique $u_j(x) \in [-j\log x,-(j+1)\log x]$ such that $f(u_j(x)) = \lambda_{\infty}(x)-\lambda_j(x),$ for all $j \geq 0$. This yields the following.
\begin{align*}
\int_{\lambda_{j-1}(x)}^{\lambda_j(x)} \exp\left( -\dfrac{1}{2}(\lambda_{\infty}(x)-y)\right)\, dy &= \int_{\lambda_{\infty}(x)-\lambda_j(x)}^{\lambda_{\infty}(x)-\lambda_{j-1}(x)} e^{-y/2}\, dy \\
&=\int_{u_j(x)}^{u_{j-1}(x)} e^{-f(v)/2}f^{\prime}(v)\, dv = \int_{u_{j-1}(x)}^{u_j(x)} e^{-f(v)/2-v}v^{-1}\, dv.
\end{align*}
Plugging in this expression in \Cref{express2} we obtain the following.
\begin{align}{\label{express3}}
\mathbb{E}  L(\Pi_x)^m2^{-c_{-1}(\Pi_x)}
& =  \dfrac{e^{x/2}(1-x)^{1/2}}{2}\sum_{j \geq 1} j^m  \int_{u_{j-1}(x)}^{u_j(x)} e^{-f(v)/2-v}v^{-1}\, dv. 
\end{align}
Our goal is to derive the asymptotic behaviour of the above quantity as $x \uparrow 1$. With that goal in mind, we define three related quantities.
$$ S_L(x) :=   \dfrac{e^{x/2}}{2}\sum_{j \geq 1} (u_{j-1}(x))^m  \int_{u_{j-1}(x)}^{u_j(x)} e^{-f(v)/2-v}v^{-1}\, dv,$$
$$ S_M(x) :=   \dfrac{e^{x/2}}{2}\sum_{j \geq 1}   \int_{u_{j-1}(x)}^{u_j(x)} e^{-f(v)/2-v}v^{m-1}\, dv = \dfrac{e^{x/2}}{2}  \int_{u_{0}(x)}^{\infty} e^{-f(v)/2-v}v^{m-1}\, dv,$$
$$ S_U(x) :=   \dfrac{e^{x/2}}{2}\sum_{j \geq 1} (u_{j}(x))^m  \int_{u_{j-1}(x)}^{u_j(x)} e^{-f(v)/2-v}v^{-1}\, dv.$$
Since $u_{j-1}(x) \leq -j\log x \leq u_j(x)$ It is straightforward to see that 
$$ S_L(x) \leq S_M(x), (-\log x)^m (1-x)^{-1/2}\mathbb{E}  L(\Pi_x)^m2^{-c_{-1}(\Pi_x)} \leq S_U(x), \; \forall \; x \in [0,1).$$
It is easy to see that $u_j(x)-u_{j-1}(x) \leq -(j+1)\log x +(j-1)\log x = -2\log x$ and hence
\begin{align*}
S_U(x)-S_L(x) &=  \dfrac{e^{x/2}}{2}\sum_{j \geq 1} \left((u_{j}(x))^m - (u_{j-1}(x))^m \right)  \int_{u_{j-1}(x)}^{u_j(x)} e^{-f(v)/2-v}v^{-1}\, dv \\
& \leq \dfrac{e^{x/2}}{2}\sum_{j \geq 1} m(u_j(x))^{m-1}\left(u_{j}(x) - u_{j-1}(x) \right)  \int_{u_{j-1}(x)}^{u_j(x)} e^{-f(v)/2-v}v^{-1}\, dv \\
& \leq -m(\log x) e^{x/2}\sum_{j \geq 1} (u_j(x))^{m-1} \int_{u_{j-1}(x)}^{u_j(x)} e^{-f(v)/2-v}v^{-1}\, dv \\
& \leq -m(\log x) e^{x/2}\sum_{j \geq 1} 2^{m-1}\left( (u_{j-1}(x))^{m-1} + (-2\log x)^{m-1}\right) \int_{u_{j-1}(x)}^{u_j(x)} e^{-f(v)/2-v}v^{-1}\, dv \\
& \leq  -m2^{m-1}(\log x) e^{x/2}\int_{0}^{\infty} e^{-f(v)/2-v}v^{m-2}\, dv  + m4^{m-1}(-\log x)^m e^{x/2}\int_{0}^{\infty} e^{-f(v)/2-v}v^{-1}\, dv.
\end{align*}
We claim that both the integrals above are finite since $m \geq 1$. Indeed, the integrands are clearly integrable over $(1,\infty)$ since $f \geq 0$. On the otherhand, for $v \in (0,1)$, we have $f(v)-f(1) = \int_{v}^1 z^{-1}e^{-z} \, dz \geq e^{-1} \int_{v}^1 z^{-1}\, dz = -(\log v)/e$ and hence $e^{-f(v)/2-v}v^{\alpha} \leq e^{-f(1)/2}v^{1/(2e)+m-2} \leq e^{-f(1)/2}v^{1/(2e)-1}$, which is integrable over $(0,1)$. Taking $x \uparrow 1$, we conclude that $S_U(x)-S_L(x) \to 0$ as $x \uparrow 1$. Moreover $0 \leq u_0(x) \leq -\log x$ and hence $u_0(x) \to 0$ as $x \uparrow 1$, yielding that
$$S_M(x) \longrightarrow \dfrac{\sqrt{e}}{2} \int_{0}^{\infty} e^{-f(v)/2-v}v^{m-1}\, dv  , \; \text{ as } x \uparrow 1.$$ 
Combining all of these computations we can conclude that 
$$ (-\log x)^m(1-x)^{-1/2} \mathbb{E}  L(\Pi_x)^m2^{-c_{-1}(\Pi_x)} \longrightarrow \gamma_m := \dfrac{\sqrt{e}}{2} \int_{0}^{\infty} e^{-f(v)/2-v}v^{m-1}\, dv.$$
The integral above is finite according to our previous discussion. Setting $\upsilon_{n,m} := \mathbb{E}  L(\Pi_n)^m2^{-c_{-1}(\Pi_n)}$, for all $n \geq 0$ and recalling that $-\log x \sim (1-x)$ as $x \to 1$, we conclude the following.
$$ \sum_{n \geq 0} (1-x)x^n\upsilon_{n,m} \sim \dfrac{\gamma_m}{(1-x)^{m-1/2}}, \text{i.e.}, \sum_{n \geq 0} x^n(\upsilon_{n,m}-\upsilon_{n-1,m}) \sim \dfrac{\gamma_m}{(1-x)^{m-1/2}}, \;\text{ as } x \uparrow 1.$$
This is ripe for using Hardy-Littlewood Tauberian Theorem in the form given in \cite[Theorem 7.4]{korevaar}; provided we have 
$$ \inf_{n \geq 1} n^{3/2-m}(\upsilon_{n,m}-\upsilon_{n-1,m}) > - \infty,$$
we can conclude that 
$$ \upsilon_{n,m} \sim \dfrac{\gamma_m}{\Gamma(m+1/2)}n^{m-1/2}, \; \text{ as } n \to \infty,$$
completing the proof. To prove the uniform bound, we again use the function $\omega_n$ defined earlier in this section. Note that 
\begin{align*}
\upsilon_{n,m}-\upsilon_{n-1,m} &= \mathbb{E}  L(\Pi_n)^m2^{-c_{-1}(\Pi_n)} - \mathbb{E}  L(\omega_n(\Pi_n))^m2^{-c_{-1}(\omega_n(\Pi_n))} \\
&= \mathbb{E}  L(\Pi_n)^m2^{-c_{-1}(\omega_n(\Pi_n))-\mathbbm{1}(R_n=2)} - \mathbb{E}  L(\omega_n(\Pi_n))^m2^{-c_{-1}(\omega_n(\Pi_n))} \\
& \geq \mathbb{E}  L(\omega_n(\Pi_n))^m2^{-c_{-1}(\omega_n(\Pi_n))}\left(2^{-\mathbbm{1}(R_n=2)} -1 \right) \\
& = - \dfrac{1}{2n} \mathbb{E}L(\omega_n(\Pi_n))^m2^{-c_{-1}(\omega_n(\Pi_n))}c_1(\omega_n(\Pi_n)) \geq - \dfrac{n^{m-1}}{2} \mathbb{E}2^{-c_{-1}(\omega_n(\Pi_n))}c_1(\omega_n(\Pi_n)). 
\end{align*}
Thus it is enough to show that $\sup_{n \geq 1} \sqrt{n}\mathbb{E}2^{-c_{-1}(\Pi_n)}c_1(\Pi_n) < \infty.$ Plug-in $t=1$ in \Cref{diff}  to obtain the following.
$$ \sum_{n \geq 0} x^n \mathbb{E}2^{-c_{-1}(\Pi_n)}c_1(\Pi_n) = e^{-x/2}x(1-x)^{-1/2} = \left( \sum_{k \geq 0} \dfrac{(-1)^k}{2^k k!}x^{k+1}\right)\left( \sum_{k \geq 0} 2^{-2k}{2k \choose k} x^k \right).$$ 
Comparing co-efficients we obtain,
\begin{align*}
\mathbb{E}2^{-c_{-1}(\Pi_n)}c_1(\Pi_n)  = \sum_{k =1}^n \dfrac{(-1)^{k-1}}{2^{k-1}(k-1)!}2^{-2(n-k)}{2(n-k) \choose (n-k)} \leq 2^{-2(n-1)}\left( 2(n-1)\choose (n-1) \right) \sim \dfrac{1}{\sqrt{\pi n}},
\end{align*}
where the inequality follows from the observation that the absolute values of the summands above are non-increasing in $k$ for a fixed large $n$. This completes the proof.
\end{proof}

\begin{remark}
One can try to extend the idea of this section for any $r$-regular bipartite graphs with $r \geq 3$. In order to do that, we fix $r$ colours $\left\{\texttt{col}_1, \ldots,\texttt{col}_r\right\}$ and let $\mathcal{M}^*(n,r)$ be the set of all edge-coloured (with the given $r$ colours) $r$-regular bipartite graphs.  Since every $r$-regular bipartite graph allows an edge-colouring with $r$ colours, see \cite{konig}, every element of $\mathcal{M}(n,r)$ is represented in $\mathcal{M}^*(n,r)$. It is also easy to see that $\operatorname{card}(\mathcal{M}^*(n,r))=(n!)^r$ and there is a bijection between $\mathcal{M}^*(n,r)$ and $\mathscr{S}_n^{\times r}$. This allows us to easily simulate from $\mathcal{M}^*(n,r)$ using random permutations. The difficulty arises from counting the number of edge-colouring by $r$ colours possible for a particular matrix/graph $A$ in $\mathcal{M}(n,r)$. In case of $r=2$, this number was determined by the component spectrum of $A$, which probably will not be the case for $r \geq 3$ since the number of edge colourings of a connected graph $A$ in $\mathcal{M}(n,r)$ probably is not determined by $n$ and $r$ only, but further structures of $A$. This makes the importance sampling formula in this case much more complicated to derive.
\end{remark}

\section{Acknowledgment}
The author would like to express his sincere gratitude to Prof.~Persi Diaconis for introducing the author to the wonderful book of~\cite{stanenu} and for his insightful suggestions throughout the working stage of this paper.

\end{document}